\documentclass[a4paper,11pt,reqno]{amsart}

\usepackage{geometry}
\usepackage[utf8]{inputenc}
\usepackage[T1]{fontenc}
\usepackage[english]{babel} 

\usepackage{amsmath,amsfonts,amssymb,amsthm}
\usepackage{mathtools}
\usepackage{booktabs,array}
\usepackage{enumitem}
\usepackage[linesnumbered,lined,boxed,commentsnumbered]{algorithm2e}
\usepackage{graphicx}
\usepackage{xcolor}
\usepackage{hyperref}
\usepackage{natbib} 
\usepackage{tikz}
\usetikzlibrary{positioning,arrows.meta}

\newtheorem{definition}{Definition}[section]
\newtheorem{theorem}[definition]{Theorem}
\newtheorem{example}[definition]{Example}
\newtheorem{proposition}[definition]{Proposition}
\newtheorem{lemma}[definition]{Lemma}

\newtheorem{observation}[definition]{Note}

\definecolor{myblue}{RGB}{0, 60, 160}      

\let\origmaketitle\maketitle
\def\maketitle{
	\begingroup
	\def\uppercasenonmath##1{} 
	\let\MakeUppercase\relax 
	\origmaketitle
	\endgroup
}

\begin{document}

\title[Optimization of Growth Bounds over Autocatalytic Subnetworks]{\Large Optimization of Kinetic--Stoichiometric Growth Bounds over Autocatalytic Subnetworks}

\author[V. Blanco and G. Gonz\'alez]{
{\large V\'ictor Blanco$^{\dagger}$ and Gabriel Gonz\'alez$^{\ddagger}$}\bigskip\\
\small $^\dagger$Institute of Mathematics (IMAG), Universidad de Granada, Spain\\
\small $^\ddagger$Instituto Tecnológico de Aeronáutica (ITA), Brazil\bigskip\\
\small \texttt{vblanco@ugr.es}, \texttt{gabgondom@ita.br}
}

\maketitle

\begin{abstract}
Autocatalytic subnetworks play a central role in chemical
reaction systems, where they govern the emergence of sustained production and
balanced growth. Such systems can be naturally represented as directed
multihypergraphs, providing a unified framework to describe both
stoichiometric structure and reaction kinetics. Existing optimization
approaches identify autocatalytic subnetworks by maximizing their structural
amplification, typically quantified through the Maximum Amplification Factor
(MAF). However, structural amplification alone does not determine the growth
potential of a reaction network, since kinetically slower subnetworks may
exhibit larger amplification factors than faster ones. Motivated by recent
kinetic--topological bounds for balanced growth under scalable dynamics, we
study the problem of selecting the autocatalytic subnetwork that maximizes
the growth bound
$(\alpha^*-1)\|\mathbb{S}|_{\mathcal A'}\|_\kappa$,
where $\alpha^*$ denotes the MAF and
$\|\mathbb{S}|_{\mathcal A'}\|_\kappa$ is a kinetic consumption norm. The
resulting optimization problem is formulated as a mixed-integer bilinear model
combining hypergraph selection with generalized fractional amplification
constraints. Exploiting the discrete structure of the kinetic norm, we develop
an exact parametric solution method in which each subproblem reduces to a MAF
maximization problem solved through a Dinkelbach-type generalized fractional
programming algorithm. We establish finite convergence and global optimality
of the proposed approach. Applications to the Oregonator, the formose
reaction, carbon-fixation cycles, and additional benchmark networks
demonstrate when maximizing the proposed kinetic--stoichiometric growth bound
differs from classical MAF maximization and clarify the interplay between
stoichiometric amplification, reaction kinetics, and realized balanced growth.
The proposed framework provides an exact optimization methodology for
identifying reaction subnetworks with the greatest theoretical growth
potential, thereby establishing a direct link between stoichiometric
amplification, reaction kinetics, and balanced-growth analysis within a
unified multihypergraph formulation.
\end{abstract}
 
\keywords{Chemical reaction networks;
autocatalysis;
balanced growth;
directed multihypergraphs;
kinetic--stoichiometric analysis;
mixed-integer optimization.}

\section{Introduction}\label{sec:intro}

Understanding how chemical reaction networks sustain self-amplification and balanced growth is a fundamental problem in mathematical chemistry. More broadly, similar questions arise in systems biology, economics, and network science, where interacting components collectively reinforce their own production. In chemical reaction networks, one seeks to identify subsets of molecular species and reactions capable of collectively sustaining and accelerating their own production, a question that lies at the heart of autocatalysis, systems chemistry, metabolic engineering, and origin-of-life research~\citep{kauffman1986,hordijk2004detecting,hordijk2018autocatalytic,feinberg2019foundations}. Closely related mathematical questions arise in production economies, supply chains, innovation systems, and ecological networks, where groups of interacting entities jointly reinforce the production of goods, resources, or information~\citep{neumann1945model,aghion2008economics,huang2014ecological,acemoglu2012network,acemoglu2017networks,baqaee2019granular,nagurney2013supply}. Despite their different interpretations, these systems share a common mathematical structure in which multiple entities interact simultaneously to transform collections of inputs into collections of outputs, giving rise to self-reinforcing growth mechanisms. From a mathematical perspective, this naturally leads to the optimization problem of identifying the reaction subnetwork with the greatest intrinsic growth potential.

From an optimization perspective, the identification of functional subnetworks within large interaction networks constitutes a challenging class of combinatorial optimization problems. 
Similar formulations arise in transportation, logistics, machine learning, and systems biology, where one seeks to extract the subset of components and interactions that best satisfies a prescribed performance criterion, such as connectivity, robustness, resilience, or efficiency. Optimization methodologies of this type have been successfully applied to transportation, logistics, communication, energy networks, and many other large-scale networked systems~\citep{magnanti1984network,ahuja1988network,LS2019,blanco2025fixed}. More recently, similar optimization paradigms have also become increasingly important in machine learning, where combinatorial formulations support feature selection, sparse learning, graph learning, interpretable models, and neural architecture design~\citep{bertsimas2016best,atamturk2020strong,fischetti2018deep,labbe2019mixed}. In chemical reaction networks, optimization has traditionally focused on metabolic flux analysis, pathway identification, strain design, and stoichiometric feasibility~\citep{orth2010reconstruction,burgard2003optknock,klamt2004hypergraphs}. Comparatively less attention has been devoted to optimization models that directly identify reaction subnetworks responsible for self-amplification and sustained growth.

Only recently have optimization-based methodologies begun to address self-amplification directly. Exact optimization models have been proposed for identifying stoichiometrically autocatalytic subnetworks through the Maximum Amplification Factor (MAF), thereby avoiding the combinatorial enumeration of amplification structures~\citep{gonzalez2024maf,Gagrani2024}. These ideas have subsequently been extended to the construction of autocatalytic hypergraphs under multiperiod growth dynamics~\citep{blanco2026constructing}. Nevertheless, these optimization models remain fundamentally structural because the MAF depends exclusively on stoichiometric information and neglects the kinetic rates governing how rapidly reactions proceed. Consequently, two subnetworks with identical stoichiometric amplification but substantially different kinetics receive the same score, even though they may exhibit very different growth rates.

This limitation has recently been addressed theoretically by
\citet{gagrani2026topological}, who derived a kinetic--stoichiometric
upper bound on the balanced-growth rate of scalable reaction networks.
The bound combines the Maximum Amplification Factor (MAF), which measures
the stoichiometric self-amplification of a reaction subnetwork, with a
kinetic consumption norm determined by the reaction rates. It therefore
shows that attainable growth depends simultaneously on stoichiometric
amplification and reaction kinetics. Consequently, maximizing the MAF
alone does not necessarily maximize the theoretical growth rate.
Although the bound is generally not attained, it provides a computable
surrogate for the growth potential of a reaction subnetwork, naturally
suggesting the optimization problem studied in this paper.

Accordingly, we consider the optimization problem of maximizing the kinetic--stoichiometric growth bound
$(\alpha^*-1)\|\mathbb S|_{\mathcal A'}\|_\kappa$
over all autocatalytic reaction subnetworks.

To solve this problem exactly, we exploit the discrete structure of the kinetic norm. Since it assumes only finitely many values over all feasible subnetworks, the original optimization problem reduces to a finite family of generalized fractional programs, each associated with a fixed kinetic level. Each subproblem is solved exactly through a Dinkelbach-type algorithm~\citep{dinkelbach1967,crouzeix1985}, while incumbent-based bounds discard kinetic levels that cannot improve the current best solution. The resulting algorithm is finite and returns a globally optimal reaction subnetwork.

The proposed methodology is evaluated on five representative chemical reaction networks exhibiting distinct interactions between stoichiometric amplification and reaction kinetics. The computational results illustrate how incorporating kinetic information can alter the optimal autocatalytic subnetwork and identify situations in which the kinetic--stoichiometric growth bound provides either an accurate or a conservative surrogate for the actual growth dynamics.

The main contributions of this paper are threefold. First, we formulate the optimization of the kinetic--stoichiometric growth bound over reaction subnetworks, extending the theoretical framework of \citet{gagrani2026topological} from the analysis of fixed reaction networks to the identification of optimal subnetworks. Second, we develop an exact solution algorithm based on finite parametric enumeration and generalized fractional programming, and establish its finite convergence. Third, we analyze the resulting optimization criterion on representative chemical reaction networks, illustrating both its predictive capabilities and its limitations as a surrogate for balanced growth.

The rest of the paper is organized as follows. Section~\ref{sec:prelim} introduces reaction hypergraphs, autocatalytic subnetworks, the MAF, and the kinetic growth bounds on which the paper builds. Section~\ref{sec:problema} formulates the bound-optimal subnetwork problem.
The solution methodology, based on a finite descending sweep over scaled
kinetic-norm values combined with a Dinkelbach-type generalized fractional
programming scheme, is presented in Section~\ref{sec:sweep}.
Section~\ref{sec:examples} provides two illustrative
examples: a designed instance showing that MAF maximization and growth-bound
maximization may select different subnetworks, and the Oregonator benchmark,
which validates the method on a chemically meaningful autocatalytic system.
Section~\ref{sec:exp} reports the computational experiments on the formose
reaction, the rTCA and glyoxylate cycles, and synthetic scalability instances.
Finally, Section~\ref{sec:conc} concludes and discusses future research
directions.

\section{Preliminaries}\label{sec:prelim}

Chemical reaction networks can be naturally represented as directed multihypergraphs, where molecular species correspond to nodes and reactions correspond to hyperarcs connecting multisets of reactants to multisets of products. This representation captures the inherently higher-order nature of chemical reactions, in which multiple reactants may simultaneously produce multiple products, and provides a convenient mathematical framework for studying stoichiometric structure, autocatalysis, and growth phenomena \citep{feinberg2019foundations,gonzalez2024maf}. More generally, the same formalism encompasses production systems, economic input--output models, and technological networks, where nodes represent goods, resources, or technologies, and hyperarcs encode transformation processes \citep{klamt2009hypergraphs,battiston2021physics,bick2023higher}.

\begin{definition}\label{def:hypergraph}
A \emph{directed multihypergraph} is a pair
$\mathcal{H}=(\mathcal{N},\mathcal{A})$, where $\mathcal{N}$ is the set of
nodes and $\mathcal{A}$ is the set of hyperarcs. Each hyperarc
$a=(S_a,T_a)\in\mathcal{A}$ consists of a source multiset $S_a$ and a target
multiset $T_a$ over $\mathcal{N}$, representing the entities consumed and
produced by the corresponding transformation process, respectively.
Multiplicities within $S_a$ and $T_a$ account for stoichiometric coefficients,
input--output coefficients, or interaction intensities.

For each node $v\in\mathcal{N}$ and hyperarc $a\in\mathcal{A}$, let
$\mathbb{S}_{va}$ and $\mathbb{T}_{va}$ denote the multiplicities of $v$ in
$S_a$ and $T_a$, respectively. The resulting source and target incidence
matrices satisfy
$\mathbb{S},\mathbb{T}\in\mathbb{R}_{+}^{|\mathcal{N}|\times|\mathcal{A}|}$,
and define the \emph{stoichiometric} (or \emph{net incidence}) matrix
\[
\mathbb{Q}=\mathbb{T}-\mathbb{S}.
\]
\end{definition}

Chemical reaction networks constitute a fundamental example of directed
multihypergraphs. In this setting, the nodes of $\mathcal{H}$ correspond to
chemical species, while each hyperarc represents a chemical reaction. The
source and target multisets encode the reactants and products, respectively,
and the multiplicities of the species in these multisets coincide with their
stoichiometric coefficients. Consequently, the incidence matrices
$\mathbb{S}$ and $\mathbb{T}$ are precisely the reactant and product
stoichiometric matrices of the network, and
$\mathbb{Q}=\mathbb{T}-\mathbb{S}$ is the corresponding stoichiometric matrix.
This hypergraph representation is mathematically equivalent to the standard
representation of a chemical reaction network, while naturally extending to
other systems involving higher-order interactions.

\begin{example}
To illustrate this representation, consider a simple chemical reaction network
with three molecular species:
$A$, $B$, and $C$. The network is represented by the directed multihypergraph
$\mathcal{H}=(\mathcal{N},\mathcal{A})$, where
$\mathcal{N}=\{A,B,C\}$ is the set of species and
$\mathcal{A}=\{a_1,a_2,a_3\}$ is the set of reactions. Specifically,
\begin{align*}
a_1 &= (\{A,A\},\{B\})
&& \text{(dimerization: two molecules of $A$ produce one molecule of $B$)},\\
a_2 &= (\{B\},\{C,C\})
&& \text{(one molecule of $B$ produces two molecules of $C$)},\\
a_3 &= (\{C\},\{A\})
&& \text{(one molecule of $C$ is converted into one molecule of $A$)}.
\end{align*}
For this reaction network, the source and target incidence matrices are
given by
$$
\mathbb{S}=
\begin{pmatrix}
2 & 0 & 0\\
0 & 1 & 0\\
0 & 0 & 1
\end{pmatrix},
\qquad
\mathbb{T}=
\begin{pmatrix}
0 & 0 & 1\\
1 & 0 & 0\\
0 & 2 & 0
\end{pmatrix},
$$
where the rows correspond to the molecular species
$(A,B,C)$ and the columns correspond to the reactions
$(a_1,a_2,a_3)$. Thus, for example,
$\mathbb{S}_{A,a_1}=2$ indicates that reaction $a_1$ consumes two molecules of
species $A$, whereas $\mathbb{T}_{C,a_2}=2$ indicates that reaction $a_2$
produces two molecules of species $C$. The associated stoichiometric matrix is
$$
\mathbb{Q}=\mathbb{T}-\mathbb{S}=
\begin{pmatrix}
-2 & 0 & 1\\
1 & -1 & 0\\
0 & 2 & -1
\end{pmatrix}.
$$  
Figure~\ref{fig:bipartite-crn} depicts the corresponding bipartite
representation of the reaction network. Circular nodes represent the
molecular species, rectangular nodes represent the reactions, and directed
edges connect reactant species to reactions and reactions to product
species. Edge labels indicate stoichiometric coefficients whenever they are
greater than one, making the consumption and production of multiple
molecules explicit.
\begin{figure}[ht]
\centering
\begin{tikzpicture}[
    >=Latex,
    species/.style={
        circle,
        draw,
        thick,
        minimum size=8mm,
        inner sep=1pt,
        fill=blue!8
    },
    reaction/.style={
        rectangle,
        draw,
        thick,
        minimum width=8mm,
        minimum height=7mm,
        fill=orange!15
    },
    edge/.style={
        ->,
        thick
    },
    coeff/.style={
        font=\small,
        fill=white,
        inner sep=1pt
    },
    node distance=.5cm and 2cm
]

\node[species] (x1) {$A$};
\node[species, below=of x1] (x2) {$B$};
\node[species, below=of x2] (x3) {$C$};

\node[reaction, right=of x1] (r1) {$a_1$};
\node[reaction, right=of x2] (r2) {$a_2$};
\node[reaction, right=of x3] (r3) {$a_3$};

\draw[edge] (x1) -- node[coeff, above] {$2$} (r1);
\draw[edge] (r1) -- (x2);

\draw[edge] (x2) -- (r2);
\draw[edge] (r2) -- node[coeff, below] {$2$} (x3);

\draw[edge] (x3) -- (r3);
\draw[edge] (r3) to[bend right=15] (x1);

\node[font=\small\bfseries, above=4mm of x1] {Species};
\node[font=\small\bfseries, above=5mm of r1] {Reactions};

\end{tikzpicture}
\caption{Bipartite representation of the chemical reaction network $2A\to B$, $B\to 2C$, and $C\to A$. Edge labels indicate
stoichiometric coefficients greater than one.}
\label{fig:bipartite-crn}
\end{figure}
\end{example}
\begin{definition}
A \emph{hyperflow} is a vector
$\mathbf{f}\in\mathbb{R}_{+}^{|\mathcal{A}|}$ assigning a nonnegative
intensity to each hyperarc. The corresponding net production vector is
$$
\mathbf{x}=\mathbb{Q}\mathbf{f}\in\mathbb{R}^{|\mathcal{N}|},
$$
whose positive and negative components represent the net production and net
consumption of the corresponding nodes, respectively. When
$\mathcal{H}$ represents a chemical reaction network, a hyperflow coincides
with the vector of reaction rates, and $\mathbf{x}$ gives the net production
rates of the molecular species.
\end{definition}

For the reaction network above, consider the hyperflow
$\mathbf{f}=(1,3,2)^\top$, meaning that reactions $a_1$, $a_2$, and $a_3$
occur with reaction rates $1$, $3$, and $2$, respectively. The resulting net
production vector is
$$
\mathbf{x}=
\begin{pmatrix}
x_A\\
x_B\\
x_C
\end{pmatrix}
=
\mathbb{Q}\mathbf{f}
=
\begin{pmatrix}
0\\
-2\\
4
\end{pmatrix}.
$$
Thus, species $A$ is in balance, species $B$ undergoes a net consumption of
two molecules, and species $C$ has a net production of four molecules.

More generally, let $\mathbf{n}\in\mathbb{R}_{>0}^{|\mathcal{N}|}$ denote the
state vector and let
$\mathbf{J}(\mathbf{n})\in\mathbb{R}_{+}^{|\mathcal{A}|}$ be the hyperflow
induced by the interaction law. The evolution of the system is governed by
\begin{equation}
\dot{\mathbf{n}}=\mathbb{Q}\mathbf{J}(\mathbf{n}),
\label{eq:ode}
\end{equation}
where $\mathbb{Q}$ is the stoichiometric matrix. When $\mathcal{H}$
represents a chemical reaction network, $\mathbf{J}(\mathbf{n})$ is the vector
of reaction rates, and~\eqref{eq:ode} reduces to the classical
stoichiometric equations. More generally, $\mathbf{J}(\mathbf{n})$
describes the hyperflow generated by the transformation processes associated
with the hyperarcs.

We now introduce the main structural object studied in this paper: 
\emph{autocatalytic subnetworks}.

\begin{definition}
\label{def:autoamplificante}
A subhypergraph
$\mathcal{H}'=(\mathcal{N}',\mathcal{A}')\subseteq\mathcal{H}$ is
\emph{autocatalytic} with respect to
$\mathcal{M}\subseteq\mathcal{N}'$ if:
\begin{enumerate}
\item (\emph{Self-sufficiency}) Every hyperarc in $\mathcal{A}'$ has at least
one source node and one target node in $\mathcal{M}$, and every node in
$\mathcal{M}$ appears as both a source and a target node of some hyperarc in
$\mathcal{A}'$.

\item (\emph{Net positive realizability}) There exists a strictly positive
hyperflow
$\mathbf{f}\in\mathbb{R}_{>0}^{|\mathcal{A}'|}$ such that
$$
\mathbb{Q}_{\mathcal{M}}\mathbf{f}>0
$$
componentwise.
\end{enumerate}
\end{definition}

Although introduced here in the context of chemical reaction networks,
the notion of an autocatalytic subnetwork, as considered in \citep{blokhuis2020universal,Gagrani2024}, is equally applicable to a
broader class of directed multihypergraphs. In particular, it captures
self-amplifying structures~\citep{gonzalez2024maf,blanco2026constructing} arising in von Neumann production models,
economic production networks, information propagation, and other systems
whose dynamics are governed by higher-order interactions. The
amplification capability of such structures is quantified through the
Maximum Amplification Factor (MAF), introduced by
\citet{gonzalez2024maf}.

\begin{definition}
\label{def:maf}
Let $\mathcal{H}'=(\mathcal{N}',\mathcal{A}')$ be an autocatalytic subhypergraph, let $\mathcal{M}\subseteq\mathcal{N}'$, and let
$\mathbf{f}\in\mathbb{R}^{|\mathcal{A}'|}_{>0}$ be a strictly positive
hyperflow. The \emph{amplification factor} is defined as
$$
\alpha(\mathcal{H}',\mathcal{M};\mathbf{f})
=
\min_{v\in\mathcal{M}}
\frac{\displaystyle\sum_{a\in\mathcal{A}'}\mathbb{T}_{va}f_a}
{\displaystyle\sum_{a\in\mathcal{A}'}\mathbb{S}_{va}f_a},
$$
and the corresponding \emph{Maximum Amplification Factor} (MAF) is
$$
\alpha^*(\mathcal{H}',\mathcal{M})
=
\sup_{\mathbf{f}\in\mathbb{R}_{>0}^{|\mathcal{A}'|}}
\alpha(\mathcal{H}',\mathcal{M};\mathbf{f}).
$$
\end{definition}

\begin{theorem}[\cite{gonzalez2024maf}]
\label{thm:maf_char}
A subhypergraph $\mathcal{H}'$ is autocatalytic with respect to
$\mathcal{M}$ if and only if
$$
1<\alpha^*(\mathcal{H}',\mathcal{M})<\infty.
$$
\end{theorem}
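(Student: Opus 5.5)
The plan is to prove the two implications separately, treating the self-sufficiency condition of Definition~\ref{def:autoamplificante} as the standing structural hypothesis under which $\alpha^*(\mathcal{H}',\mathcal{M})$ is defined. This reading matches how Definition~\ref{def:maf} is phrased, since the ratio there is only meaningful when each $v\in\mathcal{M}$ is consumed by some hyperarc. The real content is the equivalence between the net positive realizability condition and the strict inequality $\alpha^*>1$, together with the finiteness of $\alpha^*$.

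A preliminary observation is used in both directions. Self-sufficiency says that every $v\in\mathcal{M}$ is a source node of some $a\in\mathcal{A}'$. Hence, for any strictly positive hyperflow $\mathbf{f}$, the denominator $\sum_a\mathbb{S}_{va}f_a$ is positive, so $\alpha(\mathcal{H}',\mathcal{M};\mathbf{f})$ is a well-defined finite minimum of finitely many ratios. Moreover, $\alpha(\mathcal{H}',\mathcal{M};\mathbf{f})>1$ holds if and only if $\sum_a\mathbb{T}_{va}f_a>\sum_a\mathbb{S}_{va}f_a$ for all $v\in\mathcal{M}$, that is, if and only if $\mathbb{Q}_{\mathcal{M}}\mathbf{f}>0$ componentwise.

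\emph{Forward direction.} Net positive realizability supplies $\mathbf{f}>0$ with $\mathbb{Q}_{\mathcal{M}}\mathbf{f}>0$. By the observation, $\alpha(\mathcal{H}',\mathcal{M};\mathbf{f})>1$, and therefore $\alpha^*\ge\alpha(\mathcal{H}',\mathcal{M};\mathbf{f})>1$. Finiteness is the main obstacle, and I would prove it by compactness.
\begin{itemize}
\item Suppose $\alpha^*=\infty$. Then there are $\mathbf{f}^{(k)}>0$ with $\alpha(\mathcal{H}',\mathcal{M};\mathbf{f}^{(k)})\ge k$.
\item Since $\alpha$ is invariant under positive scaling of $\mathbf{f}$, normalize to $\|\mathbf{f}^{(k)}\|_1=1$.
\item Then $\mathbb{S}_{\mathcal{M}}\mathbf{f}^{(k)}\le \tfrac1k\,\mathbb{T}_{\mathcal{M}}\mathbf{f}^{(k)}\le \tfrac{C}{k}\mathbf{1}$, where $C$ is the largest entry of $\mathbb{T}$.
\item Pass to a convergent subsequence on the simplex to get a limit $\bar{\mathbf{f}}\ge0$ with $\bar{\mathbf{f}}\neq0$ and $\mathbb{S}_{\mathcal{M}}\bar{\mathbf{f}}=0$.
\item Pick $a$ with $\bar f_a>0$. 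Self-sufficiency gives a source node $v\in\mathcal{M}$ of $a$, so $\mathbb{S}_{va}\ge1$ and $(\mathbb{S}_{\mathcal{M}}\bar{\mathbf{f}})_v\ge\bar f_a>0$, a contradiction.
\end{itemize}
Hence $\alpha^*<\infty$. This is exactly where the first clause of self-sufficiency (every hyperarc has a source in $\mathcal{M}$) is essential. Without it, a hyperarc producing $\mathcal{M}$-species from outside $\mathcal{M}$ could drive the ratios to infinity.

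\emph{Converse.} Assume $1<\alpha^*<\infty$. Since $\alpha^*$ is a supremum strictly greater than $1$, there is some $\mathbf{f}\in\mathbb{R}^{|\mathcal{A}'|}_{>0}$ with $\alpha(\mathcal{H}',\mathcal{M};\mathbf{f})>1$. By the preliminary observation this gives $\mathbb{Q}_{\mathcal{M}}\mathbf{f}>0$, which is net positive realizability. Together with the standing self-sufficiency, $\mathcal{H}'$ is autocatalytic with respect to $\mathcal{M}$. The upper bound $\alpha^*<\infty$ plays a consistency role here. It excludes degenerate situations in which some species of $\mathcal{M}$ is never consumed, which would make its ratio undefined or infinite. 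It also excludes situations in which hyperarcs without sources in $\mathcal{M}$ would carry an unbounded share of the flow. I would remark that this is why both inequalities appear in the characterization.
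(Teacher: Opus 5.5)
The paper gives no proof of this result; it imports it from \cite{gonzalez2024maf}, so there is no in-paper argument to compare with. On its own terms, your proof is correct under the reading you adopt. Positivity of the denominators, from the second clause of self-sufficiency, gives $\alpha(\mathcal H',\mathcal M;\mathbf f)>1\iff\mathbb Q_{\mathcal M}\mathbf f>0$. The lower bound and the converse both follow immediately from this, and your compactness argument for $\alpha^*<\infty$ is sound. Two points need to be stated more carefully.

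First, treating self-sufficiency as a standing hypothesis is not merely a natural reading of Definition~\ref{def:maf}. That definition would only justify its second clause. The hypothesis is in fact necessary, because the literal ``if'' direction is false. For $\mathcal M=\{A\}$ and $\mathcal A'=\{A\to 2A,\ A\to B\}$ one has $\alpha(\mathcal H',\mathcal M;\mathbf f)=2f_1/(f_1+f_2)$, so $\alpha^*=2\in(1,\infty)$. Yet $A\to B$ has no target in $\mathcal M$, so $\mathcal H'$ is not autocatalytic. You should say explicitly that what you prove is this: under self-sufficiency, net positive realizability holds if and only if $1<\alpha^*<\infty$.

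Second, your closing paragraph on the role of $\alpha^*<\infty$ is wrong. Your own finiteness argument uses only self-sufficiency, never realizability. So under your standing hypothesis $\alpha^*<\infty$ holds automatically, and the upper inequality does no work in the converse.

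Finiteness also does not exclude the degenerate situations you list. With $\mathcal M=\{A,B\}$ and the single hyperarc $A\to 2A+B$, species $B$ is never consumed and its ratio is $+\infty$, but the minimum is $2$, so $\alpha^*=2$. Likewise, adding $C\to A$ with $C\notin\mathcal M$ to $\{A\to 2A,\ B\to 2B\}$ lets a hyperarc with no source in $\mathcal M$ carry unbounded flow, while $\alpha^*$ stays equal to $2$. The accurate statement is that the substantive equivalence is realizability $\iff\alpha^*>1$, and that finiteness is a consequence of self-sufficiency.

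As a side remark, compactness can be replaced by an explicit bound. By the mediant inequality, $\alpha(\mathcal H',\mathcal M;\mathbf f)\le\max_{a\in\mathcal A'}\bigl(\sum_{v\in\mathcal M}\mathbb T_{va}\bigr)/\bigl(\sum_{v\in\mathcal M}\mathbb S_{va}\bigr)$, and all these denominators are positive by the first clause of self-sufficiency.
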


The computation of the MAF reduces to a generalized fractional program that
can be solved exactly through a Dinkelbach-type algorithm
\citep{crouzeix1985}, while identifying an autocatalytic subhypergraph
with maximum MAF leads to a mixed-integer nonlinear optimization problem
(MINLP) \citep{gonzalez2024maf}. For minimal autonomous subhypergraphs,
the MAF coincides with the dominant generalized eigenvalue of
$$
\mathbb{T}\mathbf{f}
=
\alpha^*\mathbb{S}\mathbf{f},
$$
which is analogous to the Collatz--Wielandt characterization of the Perron
spectral radius \citep{gagrani2026topological}. In the context of chemical
reaction networks, this quantity has been characterized for the five
minimal autocatalytic cores identified by
\citet{blokhuis2020universal}, whose corresponding MAF values are reported
in Table~\ref{tab:nucleos}. These cores constitute the smallest
irreducible autocatalytic motifs, and every autocatalytic reaction network
can be decomposed into combinations of these elementary structures.

\begin{table}[h]
\centering
\begin{tabular}{lccccc}
\toprule
& Type I & Type II & Type III & Type IV & Type V \\
\midrule
$\alpha^*$ & $\sqrt{2}$ & $\approx 1.32$ & $\sqrt{2}$ &
  $\varphi\approx1.618$ & $2$ \\
\bottomrule
\end{tabular}
\caption{MAF of the five minimal autocatalytic cores. 
  $\varphi=(\sqrt5+1)/2$ is the golden ratio.}
\label{tab:nucleos}
\end{table}
The MAF is a \emph{purely structural} quantity, since it depends only on
the incidence (stoichiometric) matrices $\mathbb{S}$ and $\mathbb{T}$ and is therefore
independent of the interaction law governing the hyperflows. Consequently,
different dynamical systems sharing the same multihypergraph have identical
MAF values, even though their temporal evolution may differ
substantially. Bridging this gap between structural amplification
potential and dynamic realization constitutes the main motivation of this
work. To this end, we build upon the kinetic bounds established by
\citet{gagrani2026topological} for scalable dynamics.

Following \citep{gagrani2026topological}, we restrict attention to
\emph{scalable dynamics}, that is, interaction laws whose hyperarc flow
functions satisfy
$$
\mathbf{J}(\mathbf{n}/c)=\mathbf{J}(\mathbf{n})/c,
\qquad \forall\, c>0.
$$
When the multihypergraph represents a chemical reaction network, this condition
corresponds to requiring that every reaction involving internal species be
first-order with respect to those species. For example, the bimolecular reaction
$E+S\rightarrow ES$ with flow $J=\kappa \,n_E n_S$ does not satisfy this property,
since the corresponding flow is homogeneous of degree two.

This class nevertheless includes first-order mass-action kinetics and,
exactly, the pseudo-first-order reactions obtained by fixing the concentration
of external species (\emph{food}). If a bimolecular reaction consumes one
external species, whose concentration remains constant, and one internal species
$s'$, then
\begin{align}
J_a(\mathbf{n})
=
\kappa_a[s_{\rm ext}]\,n_{s'}
=
\kappa_a^{\rm eff}\,n_{s'},
\qquad
\kappa_a^{\rm eff}:=\kappa_a[s_{\rm ext}],
\end{align}
so that the reaction becomes exactly first-order with respect to the internal
species. This is not an approximation but rather a consequence of separating
internal and external species. Accordingly, each instance is preprocessed by
fixing the external concentrations, absorbing them into the effective constants
$\kappa_a^{\rm eff}$, and removing reactions that remain bimolecular in internal
species.

Under scalable dynamics, system~\eqref{eq:ode} admits
\emph{balanced-growth solutions}
$\mathbf{n}(t)=e^{\Lambda t}\mathbf{n}_0$,
with growth rate $\Lambda\in\mathbb{R}$ and composition
$\mathbf{n}_0>0$ satisfying
$\mathbb{Q}\mathbf{J}(\mathbf{n}_0)=\Lambda\mathbf{n}_0$.
Thus, balanced growth corresponds to an exponential growth regime in which all
species grow at the same asymptotic rate while preserving constant relative
concentrations. Such regimes play a central role in autocatalytic reaction systems because
they characterize the long-term behavior of self-sustaining reaction networks
capable of persistent exponential growth.
\begin{definition}[\cite{gagrani2026topological}]
\label{def:norma_cinetica}
Given a subhypergraph
$\mathcal{H}'=(\mathcal{N}',\mathcal{A}')$
with kinetic constants $\{\kappa_a\}_{a\in\mathcal{A}'}$, the
\emph{kinetic norm} of the source incidence matrix is
$$
\|\mathbb{S}|_{\mathcal{A}'}\|_\kappa
:=
\max_{v\in\mathcal{N}'}
\sum_{a\in\mathcal{A}'}
\mathbb{S}_{va}\kappa_a.
$$
\end{definition}

For each node $v$, $\sum_{a\in\mathcal{A}'}
\mathbb{S}_{va}\kappa_a$ represents the total kinetic consumption rate of $v$ when all hyperarcs operate
at unit concentration. The maximum over all nodes therefore identifies the
largest kinetic consumption demand within the subhypergraph.

\begin{theorem}[\cite{gagrani2026topological}]
\label{thm:cotas}
Let $\mathcal{H}'=(\mathcal{N}',\mathcal{A}')$ be an autocatalytic 
subhypergraph endowed with scalable dynamics, kinetic constants
$\{\kappa_a\}_{a\in\mathcal{A}'}$, and Maximum Amplification Factor
$\alpha^*$. Then the balanced-growth rate satisfies
$$
-\|\mathbb{S}|_{\mathcal{A}'}\|_\kappa
\le
\Lambda
\le
(\alpha^*-1)\,
\|\mathbb{S}|_{\mathcal{A}'}\|_\kappa.
$$
\end{theorem}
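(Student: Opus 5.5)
We work directly with the balanced-growth equation $\mathbb{Q}\mathbf{J}(\mathbf{n}_0)=\Lambda\mathbf{n}_0$. Under scalable dynamics with the pseudo-first-order preprocessing described above, each hyperarc flux has the form $J_a(\mathbf{n})=\kappa_a n_{s(a)}$. Here $s(a)$ is the unique internal reactant species of $a$, and by the first-order assumption $\mathbb{S}_{s(a)a}=1$; we take $\mathbb{S}_{va}=0$ for other internal $v$, with external species already absorbed into $\kappa_a$. Set $\mathbf{f}:=\mathbf{J}(\mathbf{n}_0)$. Since $\mathbf{n}_0>0$, we have $\mathbf{f}\in\mathbb{R}^{|\mathcal{A}'|}_{>0}$, so $\mathbf{f}$ is an admissible hyperflow in Definition~\ref{def:maf}. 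Write $K:=\|\mathbb{S}|_{\mathcal{A}'}\|_\kappa$. The key quantity is the consumption of each node $v$, namely $(\mathbb{S}\mathbf{f})_v=\sum_a \mathbb{S}_{va}\kappa_a n_{s(a)}$. Under the first-order structure, $s(a)=v$ whenever $\mathbb{S}_{va}>0$, so
\[
(\mathbb{S}\mathbf{f})_v = n_{0,v}\sum_{a}\mathbb{S}_{va}\kappa_a \le K\, n_{0,v}.
\]
This comparison between consumption and concentration is the only place kinetics enters.

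For the \textbf{lower bound}, the balance equation reads $\Lambda n_{0,v}=(\mathbb{T}\mathbf{f})_v-(\mathbb{S}\mathbf{f})_v\ge -(\mathbb{S}\mathbf{f})_v\ge -K n_{0,v}$ for every $v$. Dividing by $n_{0,v}>0$ gives $\Lambda\ge -K$ immediately.

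For the \textbf{upper bound}, consider the case $\Lambda>0$, since otherwise the bound is trivial because $\alpha^*>1$ by Theorem~\ref{thm:maf_char}. For each $v\in\mathcal{M}$ we rewrite the balance equation as
\[
\frac{(\mathbb{T}\mathbf{f})_v}{(\mathbb{S}\mathbf{f})_v}=1+\frac{\Lambda n_{0,v}}{(\mathbb{S}\mathbf{f})_v}\ge 1+\frac{\Lambda}{K}.
\]
Here we use $\Lambda>0$ together with $(\mathbb{S}\mathbf{f})_v\le K n_{0,v}$. The self-sufficiency condition guarantees that each $v\in\mathcal{M}$ is consumed by some arc, so the denominator is positive. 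Taking the minimum over $v\in\mathcal{M}$ gives $\alpha(\mathcal{H}',\mathcal{M};\mathbf{f})\ge 1+\Lambda/K$. Taking the supremum, $\alpha^*\ge 1+\Lambda/K$, which rearranges to $\Lambda\le(\alpha^*-1)K$.

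The \textbf{main obstacle} is justifying the pointwise estimate $(\mathbb{S}\mathbf{f})_v\le K n_{0,v}$. This estimate holds cleanly only when every reactant of an arc that touches $v$ is $v$ itself, i.e.\ exactly first-order kinetics in a single internal species. If the preprocessing leaves arcs whose internal reactant differs from $v$, or arcs with stoichiometric coefficient $\mathbb{S}_{va}\neq 1$, we must use the scalability hypothesis more carefully. The first approach to try is to use the degree-one homogeneity to write $J_a(\mathbf{n})=\kappa_a\,\ell_a(\mathbf{n})$ with $\ell_a$ linear, and to require the normalization $\mathbb{S}_{va}\ell_a(\mathbf{n})\le \mathbb{S}_{va} n_v$ implicit in the definition of $\kappa$ in \citet{gagrani2026topological}. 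A further subtlety is that $\mathbf{n}_0$ is indexed by all of $\mathcal{N}'$ while the MAF uses only $\mathcal{M}$. This causes no difficulty, because the minimum in Definition~\ref{def:maf} only ranges over $\mathcal{M}$ and the estimate above is applied node by node.
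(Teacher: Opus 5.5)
The paper gives no proof of this theorem; it imports the result from \citet{gagrani2026topological}, so your argument has to stand on its own. In the first-order setting $J_a(\mathbf n)=\kappa_a n_{s(a)}$, where the paper actually works, it does. You take the balanced-growth flux $\mathbf f=\mathbf J(\mathbf n_0)$ as a test hyperflow in Definition~\ref{def:maf} and combine it with the identity $(\mathbb S\mathbf f)_v=n_{0,v}\sum_a\mathbb S_{va}\kappa_a\le K\,n_{0,v}$. This gives a short Collatz--Wielandt-type proof of both inequalities. It also makes explicit what is used, which a bare citation does not: strict positivity of $\mathbf n_0$, the source part of self-sufficiency for $v\in\mathcal M$, and the consumption estimate.

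There are two caveats.

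First, your last paragraph does not extend the proof to general scalable dynamics. Degree-one homogeneity does not make $J_a$ linear: for example, $\kappa_a n_Xn_Y/(n_X+n_Y)$ is scalable but not linear. So the decomposition $J_a=\kappa_a\ell_a$ is not available. In that generality the constants $\kappa_a$ only mean something once they are tied to the fluxes by a consumption bound
$\sum_a\mathbb S_{va}J_a(\mathbf n)\le n_v\sum_a\mathbb S_{va}\kappa_a$.
This is the ``unit concentration'' reading the paper gives to the kinetic norm. You should state it as a hypothesis rather than try to derive it; with it, the rest of your proof goes through unchanged.

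Second, $\mathbf n_0>0$ is essential, not a technicality. Take the disjoint union of $A\to2A$ and $B\to3B$, both with $\kappa=1$.
\begin{itemize}
\item Self-sufficiency forces $\mathcal M=\{A,B\}$, so $\alpha^*=2$ and $K=1$.
\item Yet $M=\operatorname{diag}(1,2)$ has dominant eigenvalue $2>(\alpha^*-1)K=1$.
\end{itemize}
Your proof is therefore valid exactly for balanced-growth rates with a positive composition vector, as in the paper's definition. It does not cover the dominant eigenvalue of a reducible Metzler $M$ whose Perron vector is only nonnegative, which the paper later identifies with $\Lambda$. Assuming $M$ irreducible closes this gap, since then the Perron vector is strictly positive and your argument applies.
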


The lower bound depends only on the kinetic constants and provides a universal
limit on the maximum contraction rate. The upper bound combines two
complementary mechanisms governing growth: the stoichiometric amplification,
measured by the factor $\alpha^*-1$, and the maximum kinetic consumption rate,
measured by $\|\mathbb{S}|_{\mathcal{A}'}\|_\kappa$. Consequently,
$(\alpha^*-1)\|\mathbb{S}|_{\mathcal{A}'}\|_\kappa$ provides an upper bound on
the balanced-growth rate. The optimization models developed in this paper aim
to identify autocatalytic subnetworks maximizing this upper bound.

In what follows, we focus on first-order scalable kinetics, where each
hyperarc $a$ follows the interaction law
$J_a(\mathbf{n})=\kappa_a n_{s(a)}$, with $s(a)$ denoting the unique internal
source node of $a$. The corresponding hyperflow is
$$
\mathbf{J}(\mathbf{n})
=
\operatorname{diag}(\kappa)\,P\,\mathbf{n},
$$
where $P\in\{0,1\}^{|\mathcal{A}|\times|\mathcal{N}|}$ is the incidence matrix
identifying the source node of each hyperarc. Substituting this expression into
equation~\eqref{eq:ode} yields the linear dynamical system
\begin{equation}
\dot{\mathbf{n}}
=
M\mathbf{n},
\qquad
M=
\mathbb{Q}\operatorname{diag}(\kappa)P,
\label{eq:operador-M}
\end{equation}
where $\mathbb{Q}=\mathbb{T}-\mathbb{S}$ is the stoichiometric matrix. Since
the off-diagonal entries of $M$ are nonnegative, $M$ is a Metzler matrix.
Therefore, by the Perron--Frobenius theorem for Metzler matrices, the
eigenvalue of $M$ with largest real part is real and admits a nonnegative
eigenvector. This dominant eigenvalue is precisely the balanced-growth rate
$\Lambda$, while the associated eigenvector describes the asymptotic
composition of the growing system.

\section{The Growth-Bound Optimization Problem}\label{sec:problema}

Complex reaction networks typically contain many autocatalytic (or self-amplifying) 
subnetworks, each characterized by a different balance between
stoichiometric amplification and kinetic activity. Consequently, not all
autocatalytic subnetworks exhibit the same growth potential, and an
important question is how to identify those that are theoretically capable of
supporting the fastest balanced growth.

Theorem~\ref{thm:cotas} shows that the balanced-growth rate of a
autocatalytic subhypergraph is controlled by two complementary factors: the
Maximum Amplification Factor $\alpha^*$, which quantifies its stoichiometric
amplification capacity, and the kinetic norm
$\|\mathbb{S}|_{\mathcal{A}'}\|_{\kappa}$, which measures its maximum kinetic
consumption rate. Since both quantities depend on the selected subhypergraph,
maximizing the MAF alone does not necessarily maximize the upper bound on the
balanced-growth rate. Indeed, a subhypergraph may exhibit a large
amplification factor while remaining kinetically slow, leading to a weaker
growth bound than another subhypergraph with a smaller MAF but substantially
higher kinetic activity.

The optimization framework proposed in~\cite{gonzalez2024maf} identifies the
self-amplifying (or autocatalytic) subhypergraph with maximum amplification factor,
$$
\mathcal{H}^*
=
\arg\max_{\mathcal{H}'\subseteq\mathcal{H}}
\alpha^*(\mathcal{H}',\mathcal{M}).
$$
However, the previous discussion shows that maximizing $\alpha^*$ alone is
insufficient to maximize the growth-rate bound. This observation motivates the
main contribution of this work: instead of optimizing only the stoichiometric
amplification, we seek the autocatalytic subhypergraph maximizing the full
kinetic--stoichiometric upper bound,
\begin{equation}
\max_{\mathcal{H}'\subseteq\mathcal{H}}
\left(\alpha^*(\mathcal{H}',\mathcal{M})-1\right)
\|\mathbb{S}|_{\mathcal{A}'}\|_{\kappa}.
\label{eq:problema_principal}
\end{equation}
Chemically, this optimization problem seeks the subset of reactions that is
predicted to sustain the largest theoretically attainable balanced-growth
rate under the available kinetic information.

To encode~\eqref{eq:problema_principal} as a mathematical optimization model, we introduce binary variables $z_a\in\{0,1\}$ indicating whether hyperarc $a\in\mathcal A$ belongs to $\mathcal A'$, binary variables $y_v\in\{0,1\}$ indicating whether node $v\in\mathcal N$ belongs to the amplifying set $\mathcal M$, and a hyperflow vector $\mathbf f\in\mathbb R_+^{|\mathcal A|}$. The support variables and hyperflows are linked by $z_a=\mathbf{1}[f_a>0]$ following~\citep{gonzalez2024maf}. With this encoding, the kinetic norm in Definition~\ref{def:norma_cinetica} becomes
$$
  \|\mathbb{S}|_{\mathcal A'}\|_{\kappa}
  =
  \max_{v\in\mathcal N}
  \sum_{a\in\mathcal A}\mathbb{S}_{va}\kappa_a z_a,
$$
that is, the maximum of $|\mathcal N|$ linear functions in the binary variables
$z_a$. Introducing an auxiliary variable $\mu\geq0$ and the constraints
\begin{equation}
  \mu \geq
  \sum_{a\in\mathcal A}\mathbb{S}_{va}\kappa_a z_a,
  \qquad \forall v\in\mathcal N,
  \label{eq:restriccion_mu}
\end{equation}
ensures that $\mu$ is at least as large as the kinetic consumption of every
node, i.e., $\mu\geq
\|\mathbb{S}|_{\mathcal A'}\|_{\kappa}$. However, these
inequalities alone do not enforce equality: $\mu$ could take an arbitrarily
larger value, making the objective unbounded since it is increasing in $\mu$.
To impose $\mu=\|\mathbb{S}|_{\mathcal A'}\|_{\kappa}$, we introduce
binary variables $w_v\in\{0,1\}$ selecting a node that attains the maximum, and
add
\begin{align}
  \sum_{a\in\mathcal A}\mathbb{S}_{va}\kappa_a z_a
  &\geq \mu w_v,
  && \forall v\in\mathcal N,
  \nonumber\\
  \sum_{v\in\mathcal N}w_v &\geq 1.
  \label{eq:mu_lower}
\end{align}
Thus, at least one node satisfies $w_v=1$ and must fulfill
$\sum_a\mathbb{S}_{va}\kappa_a z_a\geq\mu$. Together with
\eqref{eq:restriccion_mu}, this implies that this sum is exactly $\mu$, and
hence that $\mu$ coincides with the maximum over nodes.

Replacing the kinetic norm by $\mu$, the objective~\eqref{eq:problema_principal}
reduces to
\begin{equation}
  \max\;(\alpha^*-1)\mu,
  \label{eq:obj_bilineal}
\end{equation}
which is \emph{bilinear} in $(\alpha^*,\mu)$. Both factors depend on the same
binary selection variables $z_a$, and $\alpha^*$ is itself the optimal value of
the generalized fractional MAF problem in Definition~\ref{def:maf} over the
support $\{a:z_a=1\}$.

The constraints defining the feasible set are inherited from the MAF characterization in~\citep{gonzalez2024maf}, enriched with the coupling between the selected support $z_a$ and the kinetic norm. Let $\Delta$ be an upper bound on the hyperflows and let $\varepsilon=1/\Delta$ be the support threshold. Combining the hyperflow variables $\mathbf f$, the hyperarc-selection variables $\mathbf z$, the node-selection variables $\mathbf y$, and the kinetic-norm maximizer variables $\mathbf w$, the growth-bound optimization problem is written as:
\begin{align}
  \max_{\alpha,\mu,\mathbf f,\mathbf y,\mathbf z,\mathbf w}\quad
  &(\alpha-1)\mu
  \tag{P}\label{eq:problema_completo}\\
  \text{s.t.}\quad
  &\sum_{a\in\mathcal A}\mathbb{T}_{va}f_a
  \geq
  \alpha y_v\sum_{a\in\mathcal A}\mathbb{S}_{va}f_a,
  && \forall v\in\mathcal N,
  \label{eq:P-amp}\\[2pt]
  &y_v \leq
  \sum_{a\in\mathcal A:\mathbb{T}_{va}>0} f_a,\quad
  y_v \leq
  \sum_{a\in\mathcal A:\mathbb{S}_{va}>0} f_a,
  && \forall v\in\mathcal N,
  \label{eq:P-auto}\\[2pt]
  &\sum_{a\in\mathcal A}\mathbb{S}_{va}f_a \geq y_v,
  && \forall v\in\mathcal N,
  \label{eq:P-cons}\\[2pt]
  &f_a \leq
  \Delta\sum_{v\in\mathcal N:\mathbb{T}_{va}>0} y_v,\quad
  f_a \leq
  \Delta\sum_{v\in\mathcal N:\mathbb{S}_{va}>0} y_v,
  && \forall a\in\mathcal A,
  \label{eq:P-link}\\[2pt]
  &\varepsilon z_a \leq f_a \leq \Delta z_a,
  && \forall a\in\mathcal A,
  \label{eq:P-supp}\\[2pt]
  &\mu \geq
  \sum_{a\in\mathcal A}\mathbb{S}_{va}\kappa_a z_a,
  && \forall v\in\mathcal N,
  \label{eq:P-sup}\\[2pt]
  &\sum_{a\in\mathcal A}\mathbb{S}_{va}\kappa_a z_a
  \geq \mu w_v,
  && \forall v\in\mathcal N,
  \label{eq:P-inf}\\[2pt]
  &\sum_{v\in\mathcal N} y_v \geq 1,\quad
  \sum_{a\in\mathcal A} f_a \geq 1,\quad
  \sum_{v\in\mathcal N} w_v \geq 1,
  \label{eq:P-norm}\\[2pt]
  &\alpha,\mu\geq 0,\quad
  \mathbf f\in\mathbb R_+^{|\mathcal A|},\quad
  \mathbf y,\mathbf w\in\{0,1\}^{|\mathcal N|},\quad
  \mathbf z\in\{0,1\}^{|\mathcal A|}.
  \nonumber
\end{align}

Constraint~\eqref{eq:P-amp} enforces self-amplification: for every 
node selected in the amplifying set ($y_v=1$), total production must be at least $\alpha$ times total
consumption, while the condition is inactive when $y_v=0$.
Constraints~\eqref{eq:P-auto} impose self-sufficiency, requiring every selected
node to be both produced and consumed by some positive-flow hyperarc.
Constraint~\eqref{eq:P-cons} ensures that each selected node is effectively
consumed, and~\eqref{eq:P-link} links the flow of each hyperarc to the selection
of its incident nodes. Constraint~\eqref{eq:P-supp} enforces
$z_a=1\iff f_a\geq\varepsilon$, so that the kinetic norm is evaluated exactly on
$\mathcal A'=\{a:z_a=1\}$. Finally, constraints~\eqref{eq:P-sup}
and~\eqref{eq:P-inf} jointly impose
$\mu=\|\mathbb{S}|_{\mathcal A'}\|_{\kappa}$: the former gives
$\mu\geq\|\mathbb{S}|_{\mathcal A'}\|_{\kappa}$, while the latter,
through $\sum_v w_v\geq1$, forces at least one node to attain that maximum.
Without~\eqref{eq:P-inf}, the objective would be unbounded because
\eqref{eq:P-sup} only provides a lower bound on $\mu$.

Let
$$
B(\mathcal{H}',\mathcal{M})
:=
\bigl(\alpha^*(\mathcal{H}',\mathcal{M})-1\bigr)
\,
\|\mathbb{S}|_{\mathcal{A}'}\|_{\kappa}
$$
denote the upper bound on the balanced growth rate provided by
Theorem~\ref{thm:cotas} for an autocatalytic subhypergraph
$(\mathcal{H}',\mathcal{M})$. The effectiveness of problem~(P) as a
decision criterion depends on the relative variability of the two factors
composing $B$ over the family $\mathcal{F}$ of feasible autocatalytic
subhypergraphs.

\begin{proposition}
\label{prop:reducciones}
Let $\mathcal{F}$ denote the family of feasible autocatalytic
subhypergraphs.

\begin{enumerate}
\item[(i)]
If $
\|\mathbb{S}|_{\mathcal{A}'}\|_{\kappa}
\equiv
\bar{\kappa}$, for all $(\mathcal{H}',\mathcal{M})\in\mathcal{F}$, 
then
$$
\arg\max_{(\mathcal{H}',\mathcal{M})\in\mathcal{F}}
B(\mathcal{H}',\mathcal{M})
=
\arg\max_{(\mathcal{H}',\mathcal{M})\in\mathcal{F}}
\alpha^*(\mathcal{H}',\mathcal{M}).
$$
Hence, maximizing the growth bound is equivalent to maximizing the MAF.

\item[(ii)]
If $\alpha^*(\mathcal{H}',\mathcal{M})
\equiv
\bar{\alpha}$, for all $(\mathcal{H}',\mathcal{M})\in\mathcal{F}$, 
then
$$
\arg\max_{(\mathcal{H}',\mathcal{M})\in\mathcal{F}}
B(\mathcal{H}',\mathcal{M})
=
\arg\max_{(\mathcal{H}',\mathcal{M})\in\mathcal{F}}
\|\mathbb{S}|_{\mathcal{A}'}\|_{\kappa}.
$$
Consequently, the optimization criterion ranks feasible subhypergraphs
exclusively according to their kinetic consumption norm.
\end{enumerate}

Both statements follow immediately from the multiplicative structure of
$B(\mathcal{H}',\mathcal{M})$.
\end{proposition}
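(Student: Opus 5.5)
The argument uses one fact about positive scalars: multiplying an objective by a fixed positive constant, or composing it with a strictly increasing map, leaves its set of maximizers unchanged. Before applying this, I will confirm the signs of the factors. By Theorem~\ref{thm:maf_char}, every $(\mathcal{H}',\mathcal{M})\in\mathcal{F}$ satisfies $1<\alpha^*(\mathcal{H}',\mathcal{M})<\infty$, so $\alpha^*-1>0$ is finite on $\mathcal{F}$.

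For the kinetic norm, I will argue that $\|\mathbb{S}|_{\mathcal{A}'}\|_\kappa>0$ on $\mathcal{F}$. By self-sufficiency in Definition~\ref{def:autoamplificante}, $\mathcal{A}'$ is nonempty and each hyperarc has a source node in $\mathcal{M}$, so $\mathbb{S}_{va}\ge1$ for some pair $(v,a)$. Assuming the kinetic constants are strictly positive, as they are for genuine reactions, the corresponding row sum is at least $\kappa_a>0$.

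\textbf{Part (i).} Here $B(\mathcal{H}',\mathcal{M})=\bar\kappa\,(\alpha^*(\mathcal{H}',\mathcal{M})-1)$ with $\bar\kappa>0$. The map $t\mapsto\bar\kappa(t-1)$ is strictly increasing, so for any two members of $\mathcal{F}$, $B$ orders them exactly as $\alpha^*$ does. Hence the two maximizer sets coincide. This also covers the case where one argmax is empty (the supremum is not attained), since the other is then empty too.

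\textbf{Part (ii).} Here $B=(\bar\alpha-1)\|\mathbb{S}|_{\mathcal{A}'}\|_\kappa$ with $\bar\alpha-1>0$, and the same argument applies with the linear map $t\mapsto(\bar\alpha-1)t$. In fact, since $\mathcal{F}$ is finite (there are finitely many pairs $(\mathcal{A}',\mathcal{M})$), the kinetic norm takes finitely many values and both maxima are attained.

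The only step needing care is the strict positivity of the constant factor. If it vanished, $B$ would be identically zero, every member of $\mathcal{F}$ would be a maximizer, and the stated equality of argmax sets would fail. I would therefore explicitly invoke Theorem~\ref{thm:maf_char} for (ii) and the positivity of $\kappa$ for (i). Everything else is immediate, which matches the remark that the statements follow from the multiplicative structure of $B$.
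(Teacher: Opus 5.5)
Your proposal is correct and follows the same route as the paper, whose entire justification is the remark that a constant factor in the product $B=(\alpha^*-1)\|\mathbb{S}|_{\mathcal{A}'}\|_\kappa$ does not change the set of maximizers. Your explicit check that this constant factor is strictly positive supplies the one detail the paper leaves implicit: $\bar\kappa>0$ follows from self-sufficiency together with $\kappa_a>0$ (and $\mathbb{S}_{va}>0$ suffices in place of $\mathbb{S}_{va}\ge 1$ for non-integer coefficients), while $\bar\alpha-1\in(0,\infty)$ follows from Theorem~\ref{thm:maf_char}.
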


\begin{lemma}
\label{lem:ajuste}
Consider the elementary autocatalytic motif $A\rightarrow mA$, with $m \in \mathbb Z$, $m\geq 2$, and
with kinetic constant $\kappa$. In isolation,
$$
\alpha^*=m,
\qquad
\|\mathbb{S}\|_{\kappa}=\kappa,
\qquad
\Lambda=(m-1)\kappa.
$$
Therefore, $\Lambda
=
(\alpha^*-1)
\|\mathbb{S}\|_{\kappa}$, 
and the upper bound of Theorem~\ref{thm:cotas} is attained with equality.
\end{lemma}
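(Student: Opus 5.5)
The plan is to compute the three quantities directly from the definitions for the one-node, one-hyperarc network $\mathcal{N}=\{A\}$, $\mathcal{A}=\{a\}$ with $a=(\{A\},\{A,\dots,A\})$ ($m$ copies in the target), and then compare. The incidence matrices are the $1\times 1$ matrices $\mathbb{S}=(1)$ and $\mathbb{T}=(m)$. Hence $\mathbb{Q}=(m-1)$, and the only possible amplifying set is $\mathcal{M}=\{A\}$.

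\emph{MAF.} For any $f>0$, Definition~\ref{def:maf} gives
\[
\alpha(\mathcal{H},\{A\};f)=\frac{mf}{f}=m,
\]
which does not depend on $f$. Taking the supremum therefore yields $\alpha^*=m$. As a consistency check, $1<m<\infty$ because $m\ge 2$, so Theorem~\ref{thm:maf_char} confirms that the motif is autocatalytic. Self-sufficiency and net positive realizability can also be verified directly: $\mathbb{Q}f=(m-1)f>0$.

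\emph{Kinetic norm.} By Definition~\ref{def:norma_cinetica}, the maximum runs over a single node and the sum over a single hyperarc, so
\[
\|\mathbb{S}\|_\kappa=\mathbb{S}_{Aa}\kappa=\kappa.
\]

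\emph{Growth rate.} The kinetics is first order, $J_a(n)=\kappa n_A$, so $P=(1)$. The operator in \eqref{eq:operador-M} is then the scalar
\[
M=\mathbb{Q}\,\operatorname{diag}(\kappa)\,P=(m-1)\kappa,
\]
and the ODE is $\dot n_A=(m-1)\kappa n_A$. This has the balanced-growth solution $n_A(t)=e^{(m-1)\kappa t}n_0$ for any $n_0>0$. The dominant eigenvalue of the $1\times1$ Metzler matrix $M$ is its single entry, so $\Lambda=(m-1)\kappa$. Equivalently, $\mathbb{Q}\mathbf{J}(n_0)=\Lambda n_0$ holds only for this value of $\Lambda$.

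\emph{Conclusion.} Combining the three results gives $(\alpha^*-1)\|\mathbb{S}\|_\kappa=(m-1)\kappa=\Lambda$, so the upper bound of Theorem~\ref{thm:cotas} holds with equality.

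No step is a real obstacle. The only point needing care is the growth-rate step: one must check that the lemma's "in isolation" setting falls under the first-order scalable framework, which it does because $A$ is the unique internal source of $a$. With that in place, identifying $\Lambda$ with the dominant eigenvalue of $M$ is immediate in the scalar case.
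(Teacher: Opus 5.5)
Your proposal is correct and matches the paper's own justification, which is the same direct closed-form computation from the $1\times 1$ incidence matrices $\mathbb{S}=(1)$, $\mathbb{T}=(m)$ and the scalar growth operator $M=(m-1)\kappa$. The paper does this in its discussion of Blocks A and B in Section~\ref{sec:tesis}, giving $\alpha^*=m$, $\|\mathbb{S}\|_\kappa=\kappa$ and $\Lambda=(m-1)\kappa$.
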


To quantify the tightness of the bound, define the ratio
$$
\tau(\mathcal{H}',\mathcal{M})
:=
\frac{\Lambda(\mathcal{H}',\mathcal{M})}
{B(\mathcal{H}',\mathcal{M})}
\in(0,1].
$$
The optimization criterion induced by problem~(P) provides a reliable
ranking whenever the variability of $\tau$ across feasible
subhypergraphs is small compared with the variability of
$B(\mathcal{H}',\mathcal{M})$. Lemma~\ref{lem:ajuste} establishes that
$\tau=1$ for single-node autocatalytic motifs, where the node attaining
the kinetic norm is precisely the growth-limiting node. As this
correspondence deteriorates, the bound becomes progressively less tight.

\begin{observation}
\label{obs:fallo}
The growth-bound criterion may fail to correctly rank feasible
subhypergraphs when the following two conditions simultaneously hold:
\begin{enumerate}
\item[(i)]
the amplification factors $\alpha^*$ are nearly constant over the family
$\mathcal{F}$, so that Proposition~\ref{prop:reducciones}(ii) implies
that the optimization is driven almost entirely by the kinetic norm;

\item[(ii)]
the node attaining
$\|\mathbb{S}|_{\mathcal{A}'}\|_{\kappa}$
does not belong to the rate-limiting growth cycle determining
$\Lambda$.
\end{enumerate}

Under these conditions, the kinetic norm ceases to be a reliable proxy
for the balanced growth rate, and maximizing the upper bound
$B(\mathcal{H}',\mathcal{M})$ may not identify the subhypergraph with the
largest realized growth rate.
\end{observation}
 
\section{Exact Solution Methodology}\label{sec:sweep}

The optimization problem~\eqref{eq:problema_completo} is challenging because
the kinetic norm, the amplification factor, and the hypergraph selection are
simultaneously optimized. This results in a nonconvex mixed-integer program
containing bilinear and trilinear terms. To solve it exactly, we exploit the
fact that the kinetic norm can attain only finitely many values and decompose
the problem into a family of parametric subproblems. For a fixed value $\mu=\mu_j$ of the kinetic norm, the objective becomes
equivalent to maximizing the amplification factor over all autocatalytic
subhypergraphs having kinetic norm $\mu_j$. This yields the parametric problem
\begin{equation}
  P(\mu_j) :=
  \max_{\mathcal H'\subseteq\mathcal H}\;\alpha^*(\mathcal H',\mathcal M)
  \quad\text{s.t.}\quad
  \|\mathbb S|_{\mathcal A'}\|_{\kappa}=\mu_j .
  \label{eq:parametrico}
\end{equation}
The global optimum of~\eqref{eq:problema_completo} is recovered by evaluating the bound at every attainable value $\mu_1<\cdots<\mu_N$ of the kinetic norm and retaining the best value:
\begin{equation}
  \max_{\mathcal H'\subseteq\mathcal H}
  \bigl(\alpha^*(\mathcal H',\mathcal M)-1\bigr)
  \|\mathbb S|_{\mathcal A'}\|_{\kappa}
  =
  \max_{1\le j\le N}
  \bigl(P(\mu_j)-1\bigr)\mu_j .
  \label{eq:barrido-optimo}
\end{equation}

Fixing $\mu=\mu_j$ eliminates every bilinearity involving the kinetic norm.
The objective becomes linear in $\alpha$, and the kinetic constraints become
linear in the support variables. The only remaining source of nonconvexity is
the self-amplification constraint~\eqref{eq:P-amp}. This constraint is handled
using the generalized fractional programming approach introduced
in~\cite{crouzeix1985,gonzalez2024maf}. For a
trial value $\alpha_{\texttt{it}}$, constraint~\eqref{eq:P-amp} is linearized
and the joint selection of a subhypergraph and a hyperflow is handled through
the following auxiliary max--min MILP:
\begin{align}
  A(\alpha_{\texttt{it}},\mu_j):\quad
  \max_{\rho,\mathbf f,\mathbf y,\mathbf z,\mathbf w}\quad
  & \rho
    \label{eq:milp-aux}\\
  \text{s.t.}\quad
  & \rho \le
    (\mathbb T\mathbf f)_v
    -\alpha_{\texttt{it}}(\mathbb S\mathbf f)_v
    +M(1-y_v),
    && \forall v\in\mathcal N,
    \label{eq:aux-maximin}\\[1pt]
  & y_v \le
    \sum_{a\in\mathcal A:\mathbb T_{va}>0} f_a,\quad
    y_v \le
    \sum_{a\in\mathcal A:\mathbb S_{va}>0} f_a,
    && \forall v\in\mathcal N,
    \nonumber\\[1pt]
  & (\mathbb S\mathbf f)_v \ge y_v,
    && \forall v\in\mathcal N,
    \nonumber\\[1pt]
  & f_a \le
    \Delta\sum_{v\in\mathcal N:\mathbb T_{va}>0} y_v,\quad
    f_a \le
    \Delta\sum_{v\in\mathcal N:\mathbb S_{va}>0} y_v,
    && \forall a\in\mathcal A,
    \nonumber\\[1pt]
  & \varepsilon z_a \le f_a \le \Delta z_a,
    && \forall a\in\mathcal A,
    \nonumber\\[1pt]
  & \mu_j \ge
    \sum_{a\in\mathcal A}\mathbb S_{va}\kappa_a z_a,
    && \forall v\in\mathcal N,
    \nonumber\\[1pt]
  & \sum_{a\in\mathcal A}\mathbb S_{va}\kappa_a z_a
    \ge \mu_j w_v,
    && \forall v\in\mathcal N,
    \nonumber\\[1pt]
  & \sum_{v\in\mathcal N} y_v \ge 1,\quad
    \sum_{a\in\mathcal A} f_a \ge 1,\quad
    \sum_{v\in\mathcal N} w_v \ge 1,
    \nonumber\\[1pt]
  & \mathbf f\in\mathbb R_+^{|\mathcal A|},\quad
    \mathbf y,\mathbf w\in\{0,1\}^{|\mathcal N|},\quad
    \mathbf z\in\{0,1\}^{|\mathcal A|},\quad
    \rho\in\mathbb R.
    \nonumber
\end{align}
Here $(\mathbb T\mathbf f)_v=\sum_a\mathbb T_{va}f_a$ and
$(\mathbb S\mathbf f)_v=\sum_a\mathbb S_{va}f_a$ denote, respectively, the
production and consumption of node $v$. The constant
$$
M=
\Delta\left(
\max_v\sum_a\mathbb T_{va}
+
\alpha_{\rm it}
\max_v\sum_a\mathbb S_{va}
\right)
$$
is sufficiently large to deactivate
constraint~\eqref{eq:aux-maximin} whenever $y_v=0$. Consequently,
$\rho$ equals the minimum self-amplification margin over the selected nodes,
that is,
$$
\rho=
\min_{v:y_v=1}
\left[
(\mathbb T\mathbf f)_v
-
\alpha_{\rm it}
(\mathbb S\mathbf f)_v
\right].
$$
Hence, maximizing $\rho$ searches for the hyperflow providing the largest
minimum amplification margin for the current trial value
$\alpha_{\rm it}$. If $\rho^*=0$, the current value of
$\alpha_{\rm it}$ is optimal for the fixed kinetic norm.
Otherwise, $\rho^*>0$ certifies that a larger amplification factor exists, and
the trial value is updated according to the generalized fractional programming
scheme.

The efficiency of the proposed solution method relies on the fact that the
parameter $\mu$ does not vary continuously. Since the support vector
$\mathbf z$ is binary, the kinetic norm can attain only finitely many values.
Therefore, the outer parametric sweep reduces to solving a finite family of
subproblems. The following theorem characterizes the finite set of attainable kinetic norms.
\begin{theorem}
\label{thm:mu_values}
Let $\mathcal Z=
\left\{
\mathbf z\in\{0,1\}^{|\mathcal A|}:
\exists\,(\alpha,\mathbf f,\mathbf y,\mathbf w)
\text{ satisfying }
\eqref{eq:P-amp}\text{--}\eqref{eq:P-norm}
\right\}$, 
and define $
\mu(\mathbf z)=
\max_{v\in\mathcal N}
\sum_{a\in\mathcal A}
\mathbb S_{va}\kappa_a z_a$, for all
$\mathbf z\in\mathcal Z$. 
Then the attainable kinetic norms are precisely
$$
\mathcal U=
\{\mu(\mathbf z):\mathbf z\in\mathcal Z\}
=
\{\mu_1<\cdots<\mu_N\}.
$$
Moreover,
$$
0\le
\mu_j
\le
\bar\mu:=
\max_{v\in\mathcal N}
\sum_{a\in\mathcal A}
\mathbb S_{va}\kappa_a,
$$
and
$$
N\le |\mathcal Z|
\le
2^{|\mathcal A|}.
$$
Consequently, the outer parametric sweep only needs to solve
$P(\mu_j)$ for
$\mu_j\in\mathcal U$.
\end{theorem}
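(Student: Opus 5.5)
The plan has three parts: a finiteness argument for $\mathcal U$, elementwise bounds on $\mu(\mathbf z)$, and an exchange of maxima that justifies the sweep.

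\emph{Finiteness and the cardinality bound.} The set $\mathcal Z$ is a subset of $\{0,1\}^{|\mathcal A|}$, so $|\mathcal Z|\le 2^{|\mathcal A|}$. The set $\mathcal U$ is the image of $\mathcal Z$ under the map $\mathbf z\mapsto\mu(\mathbf z)$, so it is finite with $N=|\mathcal U|\le|\mathcal Z|$. Being a finite subset of $\mathbb R$, its distinct elements can be listed increasingly as $\mu_1<\cdots<\mu_N$. If $\mathcal Z=\emptyset$, both sides are empty and the statement is vacuous.

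\emph{Elementwise bounds.} Fix $\mathbf z\in\mathcal Z$ and $v\in\mathcal N$. Every term $\mathbb S_{va}\kappa_a z_a$ satisfies
\[
0\le \mathbb S_{va}\kappa_a z_a\le\mathbb S_{va}\kappa_a,
\]
because $\mathbb S_{va}\ge0$, $\kappa_a\ge0$ and $z_a\in\{0,1\}$. Summing over $a\in\mathcal A$ and then taking the maximum over $v$ preserves both inequalities, since the maximum is monotone. This gives $0\le\mu(\mathbf z)\le\bar\mu$, and hence every $\mu_j$ lies in $[0,\bar\mu]$.

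\emph{Justifying the sweep.} I first check that, for any feasible point of~\eqref{eq:problema_completo}, constraints~\eqref{eq:P-sup}, \eqref{eq:P-inf} and $\sum_v w_v\ge1$ force $\mu=\mu(\mathbf z)$. Constraint~\eqref{eq:P-sup} gives $\mu\ge\mu(\mathbf z)$. Choosing a node $v$ with $w_v=1$, constraint~\eqref{eq:P-inf} gives $\mu(\mathbf z)\ge\sum_a\mathbb S_{va}\kappa_a z_a\ge\mu$. Together these yield equality. It follows that the $\mu$-component of every feasible point lies in $\mathcal U$.

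Conversely, each $\mu_j\in\mathcal U$ is attained by some $\mathbf z\in\mathcal Z$. That $\mathbf z$ comes with feasible $(\alpha,\mathbf f,\mathbf y,\mathbf w)$, which may be taken to satisfy the constraints with $\mu=\mu(\mathbf z)$. Hence the feasible region of~\eqref{eq:problema_completo} is the disjoint union, over $j$, of the slices $\{\mu=\mu_j\}$. Maximizing over each slice and then over $j$ gives~\eqref{eq:barrido-optimo}. On each slice the objective $(\alpha-1)\mu_j$ is an increasing function of $\alpha$, so the slice maximum equals $(P(\mu_j)-1)\mu_j$. Values $\mu\notin\mathcal U$ support no feasible point and can therefore be skipped.

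\emph{Main obstacle.} The only delicate step is the converse direction, that a $\mathbf z\in\mathcal Z$ can be realized with $\mu=\mu(\mathbf z)$ exactly. The definition of $\mathcal Z$ lists the constraints with $\mu$ free, and I would resolve this as follows.
\begin{itemize}
\item Set $\mu:=\mu(\mathbf z)$.
\item Choose $w_v=1$ for a single maximizing node and $w_{v'}=0$ for all other nodes.
\item Constraints~\eqref{eq:P-sup} and~\eqref{eq:P-inf} then hold by construction.
\item None of the remaining constraints involves $\mu$ or $\mathbf w$, so they continue to hold.
\end{itemize}
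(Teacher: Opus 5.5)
Your arguments for finiteness, $N\le|\mathcal Z|\le 2^{|\mathcal A|}$, and $0\le\mu_j\le\bar\mu$ are the paper's own: $\mathcal U$ is the image of a subset of $\{0,1\}^{|\mathcal A|}$, and the bound comes from $0\le\mathbb S_{va}\kappa_a z_a\le\mathbb S_{va}\kappa_a$. So the proposal is correct and follows the same route. Your extra check that \eqref{eq:P-sup}, \eqref{eq:P-inf} and $\sum_v w_v\ge 1$ force $\mu=\mu(\mathbf z)$, which justifies the final sweep claim, goes beyond the paper's proof; the paper leaves that step to the surrounding discussion and Proposition~\ref{prop:exactitud}. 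One small correction: $\sum_v w_v\ge1$ in \eqref{eq:P-norm} does involve $\mathbf w$, though your single-maximizer choice satisfies it.
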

\begin{proof}
For every feasible support vector $\mathbf z\in\mathcal Z$, the value
$\mu(\mathbf z)$ is uniquely determined by
$$
\mu(\mathbf z)=
\max_{v\in\mathcal N}
\sum_{a\in\mathcal A}
\mathbb S_{va}\kappa_a z_a.
$$
Hence, $\mathcal U$ is finite because $\mathcal Z\subseteq\{0,1\}^{|\mathcal A|}$. Therefore,
$N=|\mathcal U|
\le
|\mathcal Z|
\le
2^{|\mathcal A|}$. 
Moreover, since $0\le z_a\le1$ for every $a\in\mathcal A$, we get that $
0
\le
\mu(\mathbf z)
\le
\max_{v\in\mathcal N}
\sum_{a\in\mathcal A}
\mathbb S_{va}\kappa_a
=
\bar\mu$.
\end{proof}

Theorem~\ref{thm:mu_values} characterizes the finite set of attainable
kinetic norms at the conceptual level. In practice, the attainable kinetic norms are explored after converting the
kinetic constants into scaled integers. Each constant is represented as
$\kappa_a\approx k_a^{\rm int}/s$, where $k_a^{\rm int}$ is integer and $s$ is
a common scaling factor. Consequently, every attainable kinetic norm becomes
an integer multiple of $1/s$, allowing the outer sweep to be performed over a
finite descending grid. The exactness guarantee of the sweep applies to the scaled
instance defined by $k^{\rm int}$: if the original kinetic constants are
represented exactly by this scaling, the scaled and original instances
coincide; if they are rounded before scaling, the selected support is evaluated
afterwards using the original kinetic constants, while the optimality
certificate of Proposition~\ref{prop:exactitud} refers to the scaled instance.

The descending sweep naturally admits an incumbent-based screening strategy. After solving the first $j$
parametric subproblems, let
$$
  \mathrm{LB}_j=\max_{\ell\le j}(\alpha_\ell^*-1)\mu_\ell
$$
be the best objective value found so far. Let $\alpha_{\rm glob}$ denote the
global maximum amplification factor, obtained by maximizing $\alpha^*$ over all
subhypergraphs without fixing the kinetic norm. Since $\alpha^*(\mu)\le
\alpha_{\rm glob}$ for every attainable norm $\mu$, the value $\alpha_{\rm
glob}$ is a valid constant upper bound on every parametric subproblem, and any
remaining norm $\mu$ satisfying $(\alpha_{\rm glob}-1)\,\mu\le\mathrm{LB}_j$ 
cannot improve the incumbent and can be discarded without solving $P(\mu)$.
Because $(\alpha_{\rm glob}-1)\mu$ is increasing in $\mu$ and the grid is
traversed in decreasing order, once this test holds it holds for all smaller
norms as well, so the outer sweep can terminate. This incumbent-based screening
substantially reduces the number of parametric subproblems while preserving
global optimality.

The bound $\alpha_{\rm glob}$ is itself computed with the same
Dinkelbach-type scheme, and it is a valid upper bound only once certified as
optimal. When the auxiliary problems defining it are solved to proven
optimality, screening is enabled; otherwise it is disabled and the full scaled
grid is explored, which preserves exactness at the cost of solving more
subproblems.

Algorithm~\ref{alg:3} summarizes the complete solution procedure. The kinetic constants are first converted to the integer vector
$k^{\rm int}$; in the scaled units all kinetic consumption values are integer,
and the outer loop traverses the corresponding grid in decreasing order. For
each norm level, the incumbent-based test determines whether the corresponding
parametric subproblem must be solved, and this test is active only when
$\alpha_{\rm glob}$ has been certified.

\begin{algorithm}[h!]
{\small
\caption{Descending scaled-norm sweep with certified incumbent screening.
\label{alg:3}}

\SetKwInOut{Input}{input}
\SetKwInOut{Output}{output}

\Input{$\mathcal H=(\mathcal N,\mathcal A)$, kinetic constants
$\{\kappa_a\}_{a\in\mathcal A}$, decimal precision $d$.}

\Output{Selected subhypergraph $\mathcal H'^*$, amplification factor
$\alpha^{**}$, kinetic norm $\mu^*$, and objective value $\mathrm{LB}$.}

Convert $\kappa$ into scaled integer coefficients $k^{\rm int}$\;
Construct the descending grid $\mathcal G$ of scaled kinetic norms\;

Compute the global MAF $\alpha_{\rm glob}$ without fixing the norm\;

\If{$\alpha_{\rm glob}$ is certified globally optimal}{
    $\texttt{screening}\leftarrow\texttt{true}$\;
}
\Else{
    $\texttt{screening}\leftarrow\texttt{false}$\;
}

$\mathrm{LB}\leftarrow-\infty$\;

\ForEach{$\mu_j\in\mathcal G$ in decreasing order}{

    \If{$\texttt{screening}$ and
    $(\alpha_{\rm glob}-1)\mu_j\le\mathrm{LB}$}{
        \textbf{break}\;
    }

    Solve $A(0,\mu_j)$\;

    \If{infeasible}{
        \textbf{continue}\;
    }

    Compute $\alpha_{\rm it}$ from the returned hyperflow\;

    \Repeat{$|\rho^*|\le\eta$}{

        Solve $A(\alpha_{\rm it},\mu_j)$ and obtain
        $(\rho^*,\mathbf f,\mathbf y,\mathbf z,\mathbf w)$\;

        \If{$\rho^*>\eta$}{
            $\displaystyle
            \alpha_{\rm it}\leftarrow
            \min_{v:y_v=1}
            \frac{(\mathbb T\mathbf f)_v}
                 {(\mathbb S\mathbf f)_v}$\;
        }
    }

    Recompute the kinetic norm $\mu_{\rm real}$ using the original
    kinetic constants on the selected support\;

    $\mathrm{obj}\leftarrow(\alpha_{\rm it}-1)\mu_{\rm real}$\;

    \If{$\mathrm{obj}>\mathrm{LB}$}{
        $\mathrm{LB}\leftarrow\mathrm{obj}$\;
        $\alpha^{**}\leftarrow\alpha_{\rm it}$\;
        $\mu^*\leftarrow\mu_{\rm real}$\;
        $\mathcal H'^*\leftarrow
        (\{v:y_v=1\},\{a:z_a=1\})$\;
    }
}

\Return{$\mu^*,\mathcal H'^*,\alpha^{**},\mathrm{LB}$}\;

}
\end{algorithm}
Finite convergence of the inner generalized fractional programming procedure
follows directly from the analysis in~\cite{gonzalez2024maf}. Let $\rho^*(\alpha)$ denote the optimal
value of the auxiliary problem~\eqref{eq:milp-aux} as a function of the trial
value $\alpha$. At each nonterminal iteration, the trial value
$\alpha_{\mathrm{it}}$ increases strictly. Since $\rho^*(\alpha)$ is
nonincreasing in $\alpha$ and vanishes at $\alpha=P(\mu_j)$, the sequence
$\{\alpha_{\mathrm{it}}\}$ is monotone increasing and bounded above by
$P(\mu_j)$. Moreover, the auxiliary MILP
\eqref{eq:milp-aux} admits only finitely many integer configurations
$(\mathbf y,\mathbf z,\mathbf w)$, implying that the inner procedure converges
after finitely many iterations when the auxiliary MILPs are solved to proven
optimality. In numerical computations, the stopping condition
$\rho^*=0$ is implemented with a tolerance $|\rho^*|\le\eta$.

Together with the finiteness of the outer search
(Theorem~\ref{thm:mu_values}), this yields the global optimality of the overall
procedure, which we now state formally.

\begin{proposition}
\label{prop:exactitud}
Assume that the kinetic constants are represented exactly in the scaled integer
vector $k^{\rm int}$, or that the rounded constants define the target
optimization instance. Assume also that the descending grid $\mathcal G$
contains every attainable scaled kinetic norm, that the global MAF used for
screening is certified as a valid upper bound, and that every auxiliary MILP is
solved to proven optimality. Then Algorithm~\ref{alg:3} terminates after
finitely many iterations and returns a globally optimal solution for the
corresponding scaled instance.
\end{proposition}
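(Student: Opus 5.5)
The argument has two parts, finite termination and global optimality, and combines Theorem~\ref{thm:mu_values}, the Dinkelbach-type convergence argument stated just before the proposition, and the validity of the incumbent screening.

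\emph{Finite termination.} The outer loop runs over the descending grid $\mathcal G$. This grid is finite: in scaled units every attainable norm is an integer in $[0,s\bar\mu]$, and Theorem~\ref{thm:mu_values} bounds the number of attainable values by $2^{|\mathcal A|}$. For each fixed $\mu_j$, the initialization $A(0,\mu_j)$ is a single MILP solved to proven optimality. The inner Dinkelbach loop is handled as in \cite{gonzalez2024maf}. At a nonterminal iteration, $\rho^*>0$ yields a hyperflow whose minimum ratio strictly exceeds $\alpha_{\rm it}$, so the trial values strictly increase and are bounded by $P(\mu_j)$. Each iterate is the exact MAF ratio attained by some configuration $(\mathbf y,\mathbf z,\mathbf w)$. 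For a fixed configuration, the maximal ratio is a fixed number, namely the MAF of that support restricted to the selected nodes. I would argue that after an update, $\alpha_{\rm it}$ is at least the optimal ratio of the configuration just returned. Hence no configuration can be returned again with $\rho^*>0$. Since there are finitely many binary configurations, the loop stops.

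\emph{Exactness of each subproblem.} At termination, $\rho^*(\alpha_{\rm it})\le 0$, with $\eta=0$ in exact arithmetic. Suppose some subhypergraph with norm $\mu_j$ had MAF larger than $\alpha_{\rm it}$. Its hyperflow, suitably scaled to meet the $\varepsilon,\Delta$ bounds and normalizations, would give $\rho>0$, a contradiction. Therefore $\alpha_{\rm it}=P(\mu_j)$. By \eqref{eq:barrido-optimo}, the maximum over all attainable $\mu_j$ of $(P(\mu_j)-1)\mu_j$ equals the optimum of (P). In the exact-scaled setting, $\mu_{\rm real}=\mu_j$, because the constraints \eqref{eq:P-sup}--\eqref{eq:P-inf} force the norm of the selected support to equal $\mu_j$. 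Thus the recorded objective is exactly $(P(\mu_j)-1)\mu_j$.

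\emph{Screening.} Skipped levels cannot improve the incumbent. If $(\alpha_{\rm glob}-1)\mu_j\le\mathrm{LB}$, then for every $\mu\le\mu_j$,
\[
(P(\mu)-1)\mu\le(\alpha_{\rm glob}-1)\mu\le(\alpha_{\rm glob}-1)\mu_j\le \mathrm{LB}.
\]
This uses $P(\mu)\le\alpha_{\rm glob}$, which holds because $\alpha_{\rm glob}$ is certified, together with $\alpha_{\rm glob}\ge1$. So the break loses nothing. Grid values that are not attainable give infeasible $A(0,\mu_j)$ and are skipped. Consequently the returned LB equals the maximum in \eqref{eq:barrido-optimo}, and the stored support attains it.

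\emph{Main obstacle.} I expect the hardest step to be making the inner-loop finiteness rigorous. Strict increase plus boundedness does not by itself give finite termination. One needs the configuration-recurrence argument, and also care with the $\varepsilon$--$\Delta$ scaling showing that any better hyperflow can be made feasible in $A(\alpha,\mu_j)$. For the latter I would rescale $\mathbf f$ by a positive constant, since both the ratios and the sign of $\rho$ are scale-invariant, choosing $\Delta$ large enough that the min/max flow ratio fits.
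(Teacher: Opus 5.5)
Your outer argument is the paper's proof: finiteness of $\mathcal G$ via Theorem~\ref{thm:mu_values}, the identity \eqref{eq:barrido-optimo}, $\mu_{\rm real}=\mu_j$ on the exact scaled instance, infeasibility at non-attainable grid points, and the dominated-tail screening. You rightly note that $\alpha_{\rm glob}\ge 1$ is needed for monotonicity in $\mu$. The gap is in the inner loop, where you expected it, and your proposed repair fails. The claim that after an update $\alpha_{\rm it}$ is at least the best ratio of the configuration just returned is false. $A(\alpha,\mu_j)$ maximizes the margin $\min_{v:y_v=1}[(\mathbb T\mathbf f)_v-\alpha(\mathbb S\mathbf f)_v]$, and for fixed $(\mathbf y,\mathbf z,\mathbf w)$ its maximizer is generally not a ratio maximizer, just as ordinary Dinkelbach visits several vertices of one polytope. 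Take a single node $A$ with arcs $A\to 2A$ and $A\to 3A$ at level $\mu_j=\kappa_1+\kappa_2$, which forces $\mathbf z=(1,1)$. Then $A(0,\mu_j)$ returns $\mathbf f=(\Delta,\Delta)$, so $\alpha_{\rm it}=5/2$, below the configuration's best ratio $(2\varepsilon+3\Delta)/(\varepsilon+\Delta)$. Next, $A(5/2,\mu_j)$ returns the same configuration with $\mathbf f=(\varepsilon,\Delta)$ and $\rho^*=(\Delta-\varepsilon)/2>0$. Configurations do recur with $\rho^*>0$, so counting them bounds nothing.

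In fact no argument gives finite termination under the exact test $\rho^*=0$. With rational data and vertex solutions of the MILPs, every $\alpha_{\rm it}$ is rational by induction. Yet $P(\mu_j)$ can be irrational: for the Oregonator it is the irrational root of $\alpha^3-2\alpha^2-2=0$, and its optimal flow fits in the $[\varepsilon,\Delta]$ box. For $\alpha<P(\mu_j)$, an optimal flow $\mathbf f^\star$ at level $\mu_j$ gives $\rho^*(\alpha)\ge\min_v(\mathbb S\mathbf f^\star)_v\,(P(\mu_j)-\alpha)\ge P(\mu_j)-\alpha>0$, because $(\mathbb S\mathbf f)_v\ge y_v=1$ on selected nodes. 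So the loop never stops. The paper's proof does not close this either: it infers finiteness from the finitely many integer configurations, which fails for the same reason.

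What is provable is the version with $\eta>0$. Suppose $\rho^*(\alpha_{\rm it})>\eta$ at every iteration. Then some configuration recurs infinitely often, and a limit point $\bar{\mathbf f}$ of its flows in the compact box satisfies $\min_v(\mathbb T\bar{\mathbf f})_v/(\mathbb S\bar{\mathbf f})_v=\bar\alpha:=\lim\alpha_{\rm it}$; the ratios are continuous since denominators are at least $1$. Hence the margins $\rho^*(\alpha_{\rm it})$ converge along that subsequence to $\min_v[(\mathbb T\bar{\mathbf f})_v-\bar\alpha(\mathbb S\bar{\mathbf f})_v]=0$, a contradiction. At termination, the inequality above gives $P(\mu_j)-\alpha_{\rm it}\le\eta$, so the sweep returns a value within $\eta\bar\mu$ of the optimum. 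Screening is unaffected, because $\mathrm{LB}$ is always an attained value.

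Finally, for subproblem exactness, compare with the optimum of \eqref{eq:problema_completo} at level $\mu_j$, i.e.\ over flows in the box; there any competitor is directly feasible in $A(\alpha_{\rm it},\mu_j)$. Choosing $\Delta$ ``large enough'' is not available, since $\Delta$ is fixed. Moreover, the unrestricted supremum need not be attained: in the example above it equals $3$, while the box maximum is $(2\varepsilon+3\Delta)/(\varepsilon+\Delta)$.
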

\begin{proof}
By Theorem~\ref{thm:mu_values} and the integer scaling, the grid $\mathcal G$
contains finitely many norm values, and for each of them the inner
Dinkelbach-type loop terminates finitely (as shown above). It thus remains to
check that the sweep returns the optimum. Every feasible support of the scaled
instance attains exactly one norm level $\mu_j\in\mathcal G$, so
$\max_{\mu_j\in\mathcal G}(P(\mu_j)-1)\mu_j$ is the optimal value of the
growth-bound problem. When screening is disabled this value is computed
directly. When it is enabled, a level $\mu_j$ is skipped only if
$(\alpha_{\rm glob}-1)\mu_j\le\mathrm{LB}$; since $\alpha_{\rm glob}$ is a
certified upper bound on $\alpha^*(\mu_j)$, every solution at that level scores
at most $\mathrm{LB}$ and cannot be optimal. As $(\alpha_{\rm glob}-1)\mu_j$
increases in $\mu_j$ and the grid is descending, this test, once satisfied,
holds for all smaller levels, so the skipped levels are exactly a dominated
tail. The incumbent therefore coincides with the maximum above.
\end{proof}

\section{Analytical examples}\label{sec:examples}

Before presenting the computational results, we examine two illustrative
examples that clarify the behavior of the proposed framework. The first shows
that maximizing the Maximum Amplification Factor (MAF) and maximizing the
kinetic--stoichiometric growth bound may favor different autocatalytic
subhypergraphs. The second provides an analytical verification on the
\emph{Oregonator}, a classical chemical reaction network for which the optimal
amplification factor can be derived in closed form.

\subsection{MAF maximization versus growth-bound maximization}
\label{sec:tesis}

The framework introduced in~\cite{gonzalez2024maf} identifies
autocatalytic subhypergraphs with maximum amplification factor $\alpha^*$.
By contrast, Algorithm~\ref{alg:3} maximizes the complete
kinetic--stoichiometric upper bound
$(\alpha^*-1)\|\mathbb S|_{\mathcal A'}\|_{\kappa}$. These two criteria are
not equivalent: a subhypergraph with a large stoichiometric amplification may
have slow kinetics and therefore a smaller growth bound than a less amplifying
but kinetically faster subhypergraph.

To illustrate this distinction, consider the two inclusion-minimal
autocatalytic components of the following disconnected CRN:
$$
\text{Block A:}\qquad
A\xrightarrow{\kappa_A}3A,
\qquad\qquad
\text{Block B:}\qquad
B\xrightarrow{\kappa_B}2B,
$$
where $\kappa_A=0.3$ and $\kappa_B=1$. The comparison is restricted to
inclusion-minimal autocatalytic subhypergraphs. Block A exhibits stronger
stoichiometric amplification, whereas Block B operates on a faster kinetic
scale.

Since each block consists of a single autocatalytic hyperarc, all relevant
quantities admit closed-form expressions. For a reaction
$A\rightarrow mA$ with kinetic constant $\kappa$, one has
$$
\alpha^*=m,\qquad
\|\mathbb S|_{\mathcal A'}\|_{\kappa}=\kappa,\qquad
(\alpha^*-1)\|\mathbb S|_{\mathcal A'}\|_{\kappa}=(m-1)\kappa.
$$
Table~\ref{tab:tesis} summarizes the corresponding values. The MAF criterion
ranks Block A above Block B because it has the larger amplification factor,
whereas the growth-bound criterion ranks Block B above Block A because its
faster kinetics more than compensate for its smaller stoichiometric
amplification.

\begin{table}[h]
\centering
\begin{tabular}{lccccc}
\toprule
Subhypergraph &
$\alpha^*$ &
$\|\mathbb S|_{\mathcal A'}\|_{\kappa}$ &
$(\alpha^*-1)\|\mathbb S|_{\mathcal A'}\|_{\kappa}$ &
$\Lambda$ &
Selected by\\
\midrule
Block A ($A\rightarrow3A$)
&
3
&
0.30
&
0.60
&
0.60
&
MAF\\
Block B ($B\rightarrow2B$)
&
2
&
1.00
&
1.00
&
1.00
&
Algorithm~\ref{alg:3}\\
\bottomrule
\end{tabular}
\caption{Designed instance illustrating the difference between maximizing the
MAF and maximizing the proposed growth bound.}
\label{tab:tesis}
\end{table}

For these one-species systems, the balanced-growth rate satisfies
$\Lambda=(m-1)\kappa$ and therefore coincides with the upper bound.
Consequently, Block B has the larger balanced-growth rate despite its smaller
amplification factor. In this setting, the example shows that the MAF alone
does not correctly rank the components according to their realized growth,
whereas the proposed bound does.

To examine the trade-off between amplification and kinetics, fix
$\kappa_A=0.3$ and replace $\kappa_B$ by $\lambda\kappa_B$. The growth-bound
values of Blocks A and B are then $0.6$ and $\lambda$, respectively.
Therefore, the preferred component switches from Block A to Block B at
$\lambda=0.6$, as shown in Figure~\ref{fig:tesis_switch}. This transition
illustrates how the proposed objective balances stoichiometric amplification
against kinetic speed.
\begin{figure}[h]
\centering
\includegraphics[width=0.62\textwidth, trim={0.45cm 0.5cm 0.35cm 0.35cm},
    clip]{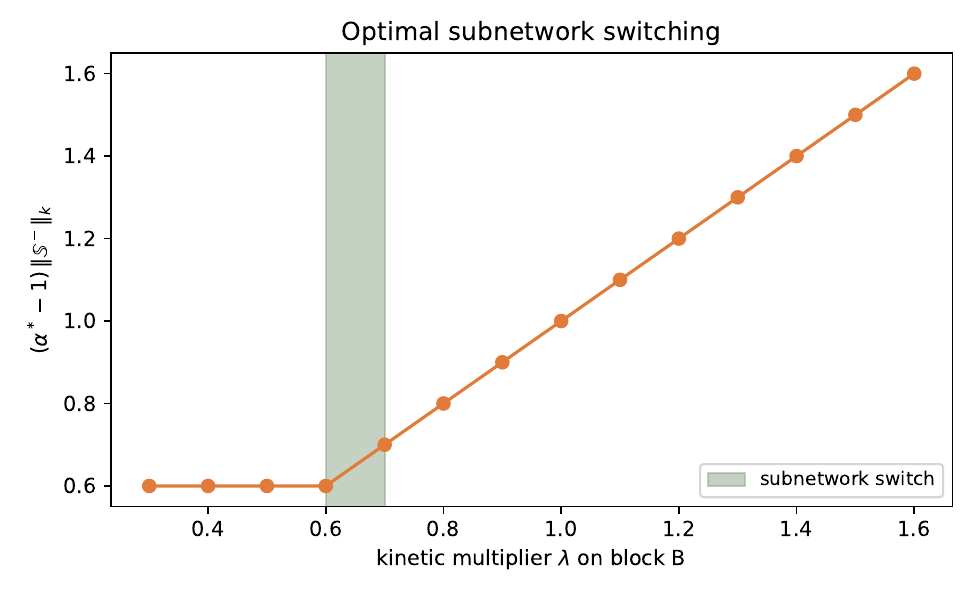}
\caption{Growth-bound values of Blocks A and B when the kinetic constant of
Block B is scaled by $\lambda$. The two criteria coincide at
$\lambda=0.6$, where the preferred component switches from Block A to
Block B.}
\label{fig:tesis_switch}
\end{figure}

\subsection{Analytical verification on the Oregonator}
\label{sec:oregonator}

We next consider the Oregonator~\citep{field1974oscillations}, a classical
kinetic model of the Belousov--Zhabotinsky oscillatory reaction. This example
provides a chemically meaningful benchmark because its reduced
autocatalytic core has an amplification factor that can be derived
analytically and compared directly with the value returned by
Algorithm~\ref{alg:3}.

The original model consists of five reactions involving three internal species,
namely bromous acid ($X$), bromide ion ($Y$), and cerium ion ($Z$), together
with two external species, bromate ($A$, $[A]=0.06$ M) and malonic acid
($B$, $[B]=0.10$ M):
\begin{align*}
  \mathrm R_1:\;&A+Y\rightarrow X+P
  &&(\kappa_1=1.28),\\
  \mathrm R_2:\;&X+Y\rightarrow2P
  &&(\kappa_2=2.4\times10^6),\\
  \mathrm R_3:\;&A+X\rightarrow2X+2Z
  &&(\kappa_3=33.6),\\
  \mathrm R_4:\;&2X\rightarrow A+P
  &&(\kappa_4=3\times10^3),\\
  \mathrm R_5:\;&B+Z\rightarrow Y
  &&(\kappa_5=0.5),
\end{align*}
where the reported bimolecular kinetic constants are expressed in
$\mathrm{M}^{-1}\mathrm{s}^{-1}$.

As discussed in Section~\ref{sec:prelim}, the present framework considers
scalable kinetics with at most one internal reactant. Reactions $\mathrm R_2$
and $\mathrm R_4$ are therefore excluded because they involve second-order
interactions among internal species. Treating the external species as
buffered and absorbing their fixed concentrations into the corresponding
rate constants gives the effective first-order constants
$\kappa_1^{\mathrm{eff}}=1.28[A]=0.0768\,\mathrm{s}^{-1}$,
$\kappa_3^{\mathrm{eff}}=33.6[A]=2.016\,\mathrm{s}^{-1}$, and
$\kappa_5^{\mathrm{eff}}=0.5[B]=0.05\,\mathrm{s}^{-1}$.

The resulting autocatalytic core is represented by the incidence matrices
(rows correspond to $X$, $Y$, and $Z$, while columns correspond to
$\mathrm R_1$, $\mathrm R_3$, and $\mathrm R_5$):
$$
\mathbb S=
\begin{pmatrix}
0&1&0\\
1&0&0\\
0&0&1
\end{pmatrix},
\qquad
\mathbb T=
\begin{pmatrix}
1&2&0\\
0&0&1\\
0&2&0
\end{pmatrix},
\qquad
\boldsymbol{\kappa}=
\begin{pmatrix}
0.0768\\
2.016\\
0.05
\end{pmatrix}.
$$

The reduced network contains the directly autocatalytic reaction
$\mathrm R_3$, which converts one molecule of $X$ into two molecules of $X$
and two molecules of $Z$. When reactions $\mathrm R_1$ and $\mathrm R_5$ are
also selected, the resulting cycle
$\{\mathrm R_1,\mathrm R_3,\mathrm R_5\}$ introduces the additional
regeneration pathway
$\mathrm R_3\rightarrow Z\rightarrow Y\rightarrow X$.
As shown below, this indirect pathway increases the amplification factor beyond
the value $\alpha^*=2$ attained by the isolated reaction $\mathrm R_3$.

\begin{figure}[h]
\centering
\includegraphics[width=0.5\textwidth, trim={0.7cm 0.4cm 2.2cm 0.9cm},
    clip]{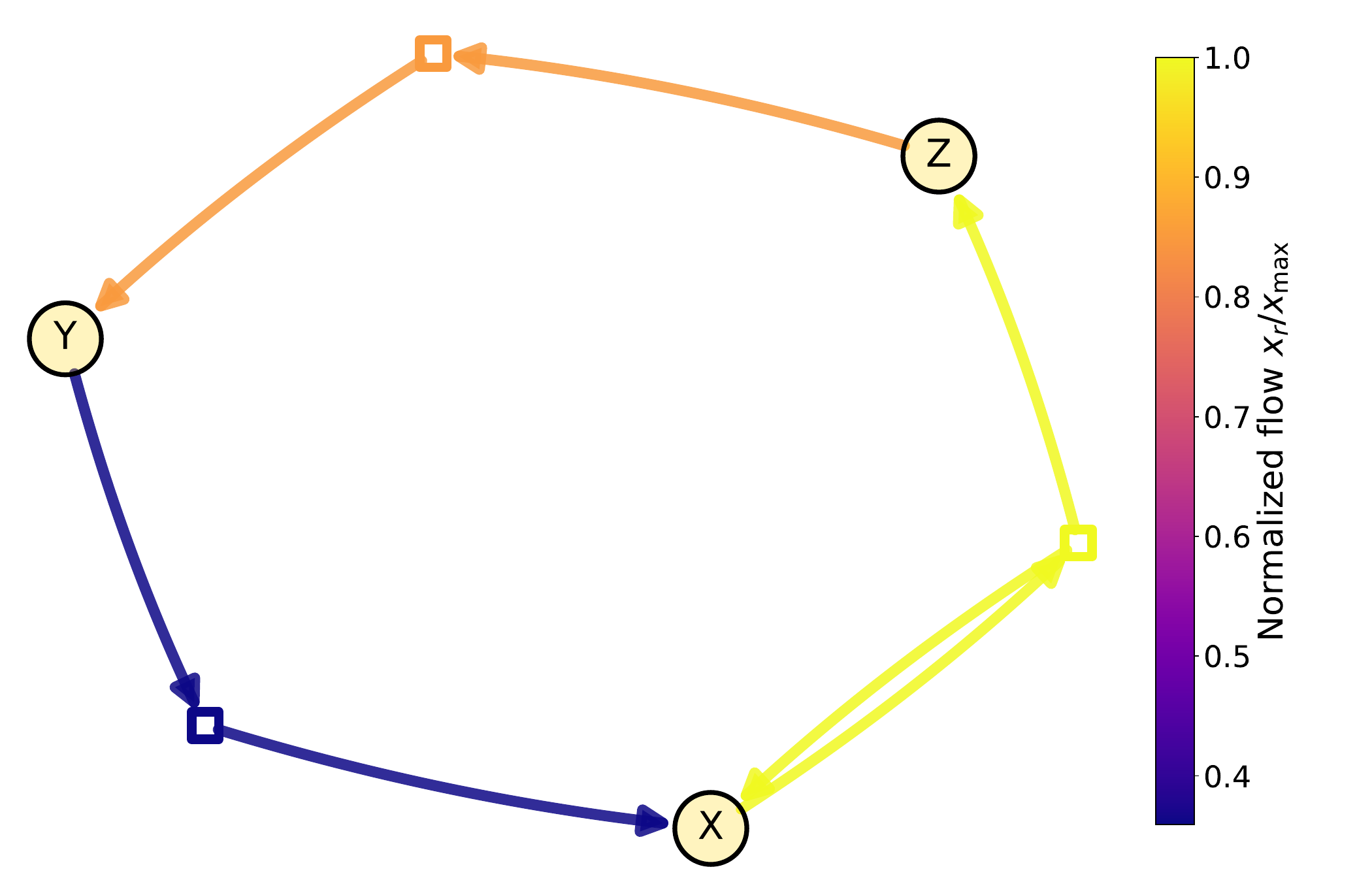}
\caption{autocatalytic core of the Oregonator and autocatalytic cycle
$\{\mathrm R_1,\mathrm R_3,\mathrm R_5\}$ recovered by
Algorithm~\ref{alg:3}. Reaction $\mathrm R_3$ regenerates species $X$
directly, while the pathway
$\mathrm R_3\!\rightarrow\!Z\!\rightarrow\!Y\!\rightarrow\!X$
provides an indirect regeneration mechanism that increases the overall
amplification factor above $2$.}
\label{fig:oregonator_subnet}
\end{figure}
The MAF of the complete cycle can be obtained directly from
Definition~\ref{def:maf}. Let $f_1$, $f_3$, and $f_5$ denote the hyperflows
through reactions $\mathrm R_1$, $\mathrm R_3$, and $\mathrm R_5$,
respectively. At an optimal strictly positive hyperflow, the
production-to-consumption ratios of the three internal species coincide:
$$
\frac{f_1+2f_3}{f_3}
=
\frac{f_5}{f_1}
=
\frac{2f_3}{f_5}
=
\alpha.
$$
Eliminating $f_1$, $f_3$, and $f_5$ yields
\begin{equation}
\alpha^3-2\alpha^2-2=0,
\label{eq:cubica_oregonator}
\end{equation}
whose unique real root is
$\alpha^*=2.359304\ldots$. Algorithm~\ref{alg:3} recovers this value up to
the prescribed numerical tolerance.

This example also illustrates a case in which maximizing the proposed growth
bound is equivalent to maximizing the MAF. Every autocatalytic subhypergraph
of the reduced Oregonator must contain reaction $\mathrm R_3$. Moreover, its
effective kinetic constant, $\kappa_3^{\mathrm{eff}}=2.016\,\mathrm{s}^{-1}$,
is larger than the kinetic consumption associated with either
$\mathrm R_1$ or $\mathrm R_5$. Consequently, every feasible
autocatalytic candidate has the same kinetic norm,
$$
\|\mathbb S|_{\mathcal A'}\|_{\kappa}=2.016\,\mathrm{s}^{-1}.
$$
By Proposition~\ref{prop:reducciones}, maximizing the proposed growth bound
therefore reduces to maximizing the MAF. Algorithm~\ref{alg:3} consequently
selects the complete cycle
$\{\mathrm R_1,\mathrm R_3,\mathrm R_5\}$, whose amplification factor is
$\alpha^*=2.359304\ldots$.

Table~\ref{tab:oregonator_resultados} compares the theoretical growth bound
with the balanced-growth rate $\Lambda$, computed as the dominant eigenvalue
of the matrix $M$ in~\eqref{eq:operador-M}. For the isolated reaction
$\mathrm R_3$, the bound is attained exactly. For the complete cycle, the
additional pathway increases the stoichiometric amplification but produces
only a small increase in $\Lambda$, because regeneration through $Z$ and $Y$
is limited by the slower reactions $\mathrm R_5$ and $\mathrm R_1$.
Therefore, the example verifies the amplification factor returned by the
algorithm while also illustrating that a larger MAF does not necessarily
translate proportionally into a larger realized balanced-growth rate.
\begin{table}[h]
\centering
\begin{tabular}{lccccc}
\toprule
Subhypergraph &
$\alpha^*$ &
$\|\mathbb S|_{\mathcal A'}\|_{\kappa}$ &
Growth bound &
$\Lambda$ &
Bound status\\
\midrule
Reaction $\mathrm R_3$ &
$2$ &
$2.016$ &
$2.016$ &
$2.016$ &
Tight\\
Cycle $\{\mathrm R_1,\mathrm R_3,\mathrm R_5\}$ &
$2.3593$ &
$2.016$ &
$2.740$ &
$2.0196$ &
Not tight\\
\bottomrule
\end{tabular}
\caption{Oregonator results. The growth bound is
$(\alpha^*-1)\|\mathbb S|_{\mathcal A'}\|_{\kappa}$, while $\Lambda$ is
computed as the dominant eigenvalue of the matrix $M$ defined
in~\eqref{eq:operador-M}.}
\label{tab:oregonator_resultados}
\end{table}

\section{Computational analysis}\label{sec:exp}



This section investigates both the chemical relevance of the proposed
growth-bound criterion and the computational behavior of the corresponding
optimization framework. We first analyze three chemically relevant reaction
networks: the formose reaction, the reverse tricarboxylic acid cycle, and the
glyoxylate cycle. These examples illustrate situations in which the proposed
criterion either differs from or coincides with classical MAF maximization,
and clarify when the theoretical growth bound provides an informative
approximation of the realized balanced-growth rate. Finally, synthetic
multihypergraphs simulating CRNs of increasing size are used to assess the
computational scalability of the proposed algorithm.

The proposed parametric sweep (Algorithm~\ref{alg:3}) and the MAF
framework~\citep{gonzalez2024maf} were implemented in Python~3.12.8 using
\texttt{gurobipy}, with Gurobi~12.0.3 as the MILP solver for the auxiliary
problems~\eqref{eq:milp-aux}. To obtain the finite search space described in
Section~\ref{sec:sweep}, the kinetic constants were converted into scaled integer
coefficients by rounding them to a prescribed decimal precision and
multiplying by a common denominator. The parametric sweep was then performed
over the resulting scaled kinetic-norm values in decreasing order. Whenever
the global MAF was certified, incumbent-based screening was activated to prune
non-improving norm levels; otherwise, the complete scaled grid was explored.

Throughout the experiments, the flow upper bound was fixed to
$$
\Delta=
\sum_{a\in\mathcal A}\sum_{v\in\mathcal N}
(\mathbb T_{va}+\mathbb S_{va}),
$$
and the support threshold was set to
$\varepsilon=1/\Delta$ according to
constraint~\eqref{eq:P-supp}. Since the optimization is carried out on the
scaled instance, the reported kinetic norms and growth-bound values were
recomputed using the original kinetic constants on the selected
subhypergraph. Consequently, the optimization certificates apply to the scaled
instance, whereas all reported numerical values correspond to the original
kinetic data.

All experiments were carried out on the Euler Cluster (CeMEAI--ICMC/USP). The
corresponding hardware specifications and solver parameters are reported
below.

Throughout this section, the quality of the proposed selection criterion is
evaluated by comparing the theoretical growth bound with the realized
balanced-growth rate $\Lambda$, computed as the dominant eigenvalue of the
growth operator $M$. Optimization is performed on the scaled kinetic instance,
whereas all reported values are recomputed using the original kinetic
constants.

\subsection{The formose reaction network}
\label{sec:formose}

The formose reaction provides the largest chemically realistic reaction
network considered in this work and illustrates the limitations of the
proposed growth bound as a surrogate for realized balanced growth.

The benchmark is the formose reaction network from
\cite{muller2022formose}, a classical model for the autocatalytic production of
sugars from formaldehyde. After removing formaldehyde, which acts as an
external food source with fixed concentration, the resulting hypergraph
contains $28$ internal nodes and $38$ hyperarcs. The $17$ reactions consuming
formaldehyde become pseudo first order after incorporating the external
concentration into the corresponding effective kinetic constants, while the
remaining reactions are already first order. Consequently, the resulting
network naturally satisfies the scalable flow assumptions introduced in our
settings.

The first observation concerns the stoichiometric structure of the network.
Applying the optimization framework of \cite{gonzalez2024maf} yields a maximum
amplification factor $\alpha^*=1.1382$. Moreover, the proposed parametric sweep
reveals that $\alpha^*$ remains essentially constant along the entire Pareto
frontier shown in Figure~\ref{fig:formosa_pareto}. Since the amplification
factor is purely determined by the incidence structure of the hypergraph, this
behavior is independent of the kinetic constants. Consequently, the objective
$(\alpha^*-1)\|\mathbb S|_{\mathcal A'}\|_{\kappa}$ is driven almost entirely by
the kinetic norm. As predicted by Proposition~\ref{prop:reducciones}, maximizing
the proposed growth bound therefore becomes practically equivalent to maximizing
the kinetic consumption rate.

\begin{figure}[h]
\centering
\includegraphics[width=0.6\textwidth,
trim={0.5cm 0.45cm 0.35cm 0.35cm},clip]
{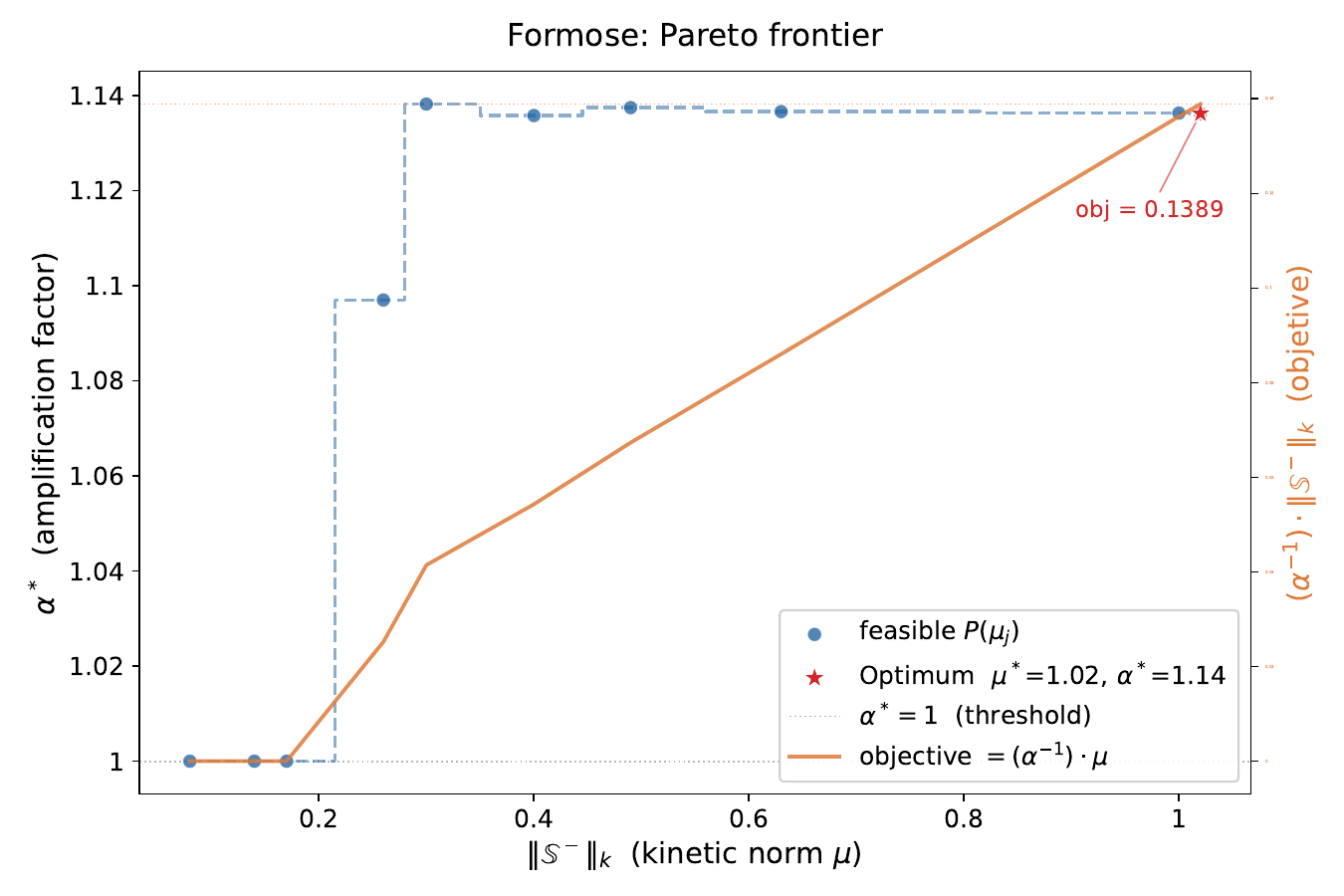}
\caption{Pareto frontier relating the Maximum Amplification Factor and the
kinetic norm for the formose reaction. Since the amplification factor rapidly
reaches its stoichiometric limit, the proposed objective is dominated by the
kinetic norm over most of the frontier.}
\label{fig:formosa_pareto}
\end{figure}

Unlike the Oregonator, the formose reaction is not accompanied by a complete set
of experimentally measured kinetic constants. Rather than selecting an
arbitrary parameter vector, we therefore adopt an ensemble approach. Reactions
are classified according to their stoichiometric type, namely aldol additions,
retro aldol reactions, isomerizations, and degradation reactions, and kinetic
constants are generated from activation energies reported in the computational
chemistry literature
\cite{kua2024formose,venturini2024formose}. For each reaction, the activation
energy is sampled uniformly within the interval associated with its reaction
class, converted into a kinetic constant using transition state theory, and
subsequently normalized. We generate $50$ independent kinetic realizations,
each corresponding to a plausible catalytic regime at fixed formaldehyde
concentration. For every realization we compute the bound optimal
autocatalytic subhypergraph, its realized balanced growth rate $\Lambda$, and
the corresponding theoretical growth bound.

The computational results reveal two important limitations of the proposed
criterion. First, although the theoretical bound of
Theorem~\ref{thm:cotas} is always satisfied, it is generally far from tight.
Figure~\ref{fig:formosa_holgura} reports the distribution of the ratio between
the realized growth rate and the theoretical upper bound over the entire
ensemble. The median value is approximately $0.003$, while the maximum observed
ratio is only $0.017$, indicating that the theoretical bound typically
overestimates the actual growth rate by more than two orders of magnitude.

This behavior originates from the separation between the reaction determining
the kinetic norm and the rate-limiting reaction governing the asymptotic growth
rate. In the formose network, the kinetic norm is determined by the fastest
aldol reactions, whereas balanced growth is controlled by the considerably
slower retro aldol reactions responsible for closing the autocatalytic cycle.
Consequently, the kinetic norm no longer provides a good surrogate for the
actual kinetic bottleneck.

\begin{figure}[h]
\centering
\includegraphics[width=0.62\textwidth,
trim={0.35cm 0.5cm 0.35cm 0.35cm},clip]
{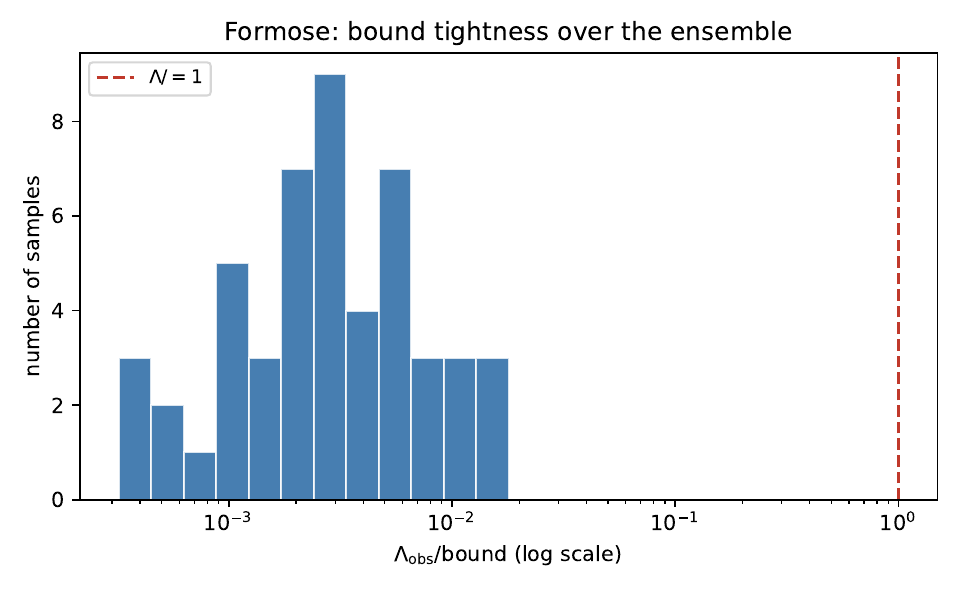}
\caption{The growth bound consistently overestimates the realized balanced-growth rate,
revealing that the fastest consuming reaction does not determine the kinetic
bottleneck of the autocatalytic cycle.}
\label{fig:formosa_holgura}
\end{figure}

The second limitation concerns the use of the growth bound as an optimization
criterion. Since the amplification factor remains almost constant throughout
the feasible region, the optimization model effectively ranks candidate
subhypergraphs according to the kinetic norm alone. As a consequence, the
selected subhypergraph does not necessarily maximize the realized growth rate.
Compared with the MAF optimal solution, the proposed criterion produces
slightly larger realized growth on average, with a median improvement of
approximately $2.4\%$. However, in $42\%$ of the sampled kinetic realizations it
selects a subhypergraph with smaller realized growth, and in the worst cases
the realized growth rate is more than an order of magnitude smaller than that of
the MAF optimal solution.

The selected subhypergraph is also sensitive to the kinetic realization.
Across the $50$ sampled parameter vectors, the optimization framework selects
nine distinct autocatalytic subhypergraphs, none of which appears in more than
$18\%$ of the instances, as illustrated in
Figure~\ref{fig:formosa_cores}. These results indicate that when the
stoichiometric amplification is essentially constant, the proposed growth bound
becomes largely controlled by the kinetic norm, whose maximizer is not
necessarily aligned with the actual kinetic bottleneck governing balanced
growth.

\begin{figure}[h]
\centering
\includegraphics[width=0.6\textwidth,
trim={0.35cm 0.45cm 0.35cm 0.35cm},clip]
{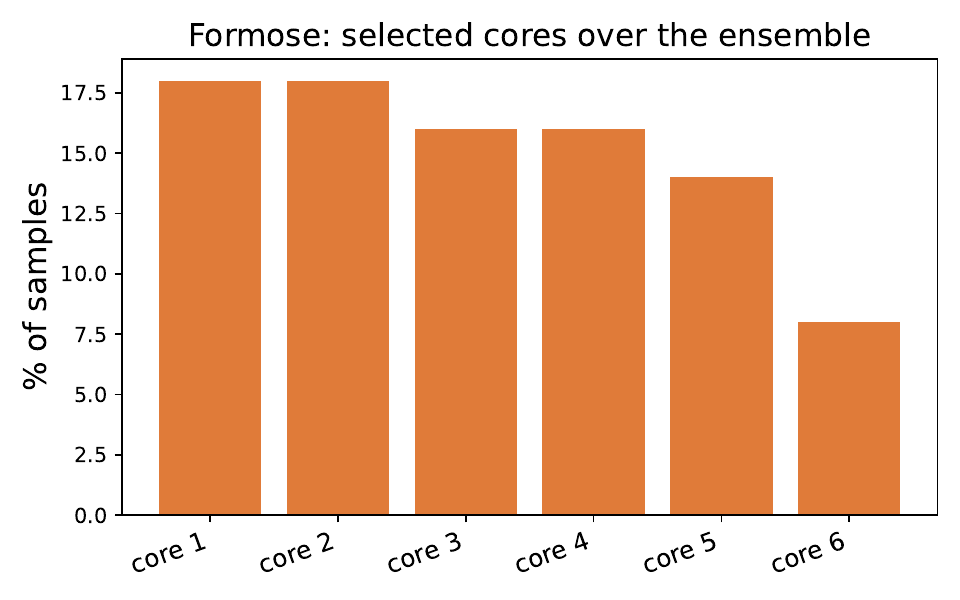}
\caption{Selection frequency of the five most frequently identified
autocatalytic subhypergraphs over the $50$ kinetic realizations of the
formose reaction. Nine different optimal subhypergraphs are selected, revealing
the sensitivity of the optimization criterion when the amplification factor is
essentially constant.}
\label{fig:formosa_cores}
\end{figure}
Finally, this network also illustrates the effect of the incumbent-based
screening. In all sampled kinetic realizations, the incumbent was found at one
of the largest scaled norm levels explored by the descending sweep. Whenever
the global MAF bound was certified, the screening test then pruned the
remaining lower norm levels immediately: across the $50$ realizations the
algorithm solved a median of two parametric subproblems, and never more than
eight, for a total of $134$ subproblems over the whole ensemble. 
This confirms that, even on
a network with $38$ hyperarcs, the outer search is dominated by very few
subproblems rather than by an exhaustive enumeration of the attainable norms.

\subsection{The rTCA and glyoxylate cycles}
\label{sec:carbono}

To determine whether the observations made for the formose reaction are
specific to that chemistry, we next consider two biologically relevant
autocatalytic pathways, 
namely the reverse tricarboxylic acid (rTCA) cycle, comprising eleven internal
species~\citep{smith2004universality}, and the glyoxylate cycle, comprising
seven internal species and known to be autocatalytic in the sense of
\cite{barenholz2017autocatalytic}. As in the previous experiments, external
metabolites such as $\mathrm{CO_2}$, reducing agents, ATP, and CoA are treated
as food species with fixed concentrations. Unlike the formose reaction, however,
no hyperarc needs to be removed. After fixing the external species, every
hyperarc is naturally described by a first-order scalable flow law involving a
single internal source node.

The computational results indicate that the lack of tightness observed in the
formose reaction is not specific to sugar chemistry. Under plausible catalytic
kinetics, generated using the same ensemble methodology described in the
previous subsection, the median ratios
$\Lambda/\bigl((\alpha^*-1)\|\mathbb S|_{\mathcal A'}\|_{\kappa}\bigr)$
are approximately $3\times10^{-3}$ for the rTCA cycle and
$5\times10^{-3}$ for the glyoxylate cycle, values of the same order of
magnitude as those obtained for the formose network. In both pathways, the
kinetic norm is determined by a fast consumption process, whereas the realized
growth rate is controlled by a different rate-limiting reaction. Consequently,
the theoretical upper bound remains valid but substantially overestimates the
realized balanced growth rate. The glyoxylate cycle is particularly
illustrative because this behavior appears even in the absence of carbon
fixation, indicating that the discrepancy is not caused by carboxylation
reactions but rather by the separation between the fastest consuming species and
the kinetic bottleneck governing the autocatalytic cycle.

From the viewpoint of subhypergraph selection, however, these metabolic
networks behave very differently from the formose reaction. Although the
parametric sweep encounters a few attainable values of the kinetic norm, the
amplification factor exceeds one at only a single value: the complete cycle in
the rTCA network ($\alpha^*=1.0764$) and the short autocatalytic loop in the
glyoxylate cycle ($\alpha^*=1.1487$). At every other attainable norm the
maximum amplification factor equals one, so the growth-bound objective
vanishes. Consequently, the growth bound is maximized by a single
autocatalytic core, and maximizing the proposed bound coincides with
maximizing the MAF, in agreement with Proposition~\ref{prop:reducciones}(i);
both optimization criteria return the same solution. In this sense these
pathways admit a unique bound-optimal core, over which the two criteria cannot
disagree.

These experiments also clarify the scope of the conclusions drawn from the
designed example and the formose reaction. A discrepancy between the proposed
growth bound and the MAF can only arise in networks possessing a sufficiently
rich combinatorial structure, namely multiple feasible autocatalytic
subhypergraphs with different kinetic norms. Such a situation occurs in the
formose reaction but not in compact metabolic cycles such as the rTCA and
glyoxylate pathways, where the optimization problem is essentially reduced to
the identification of a unique autocatalytic core.

Finally, the minimal autocatalytic core of the rTCA cycle, represented by a
three-node lumped cycle, provides an intermediate case between the illustrative
examples and the larger reaction networks. Its amplification factor equals the
plastic constant, that is, the real solution of
$\alpha^3=\alpha+1$, and the ratio
$\Lambda/\bigl((\alpha^*-1)\|\mathbb S|_{\mathcal A'}\|_{\kappa}\bigr)$
reaches approximately $0.46$ under nearly homogeneous kinetic constants. This
behavior is consistent with Lemma~\ref{lem:ajuste} and illustrates that the
growth bound becomes substantially tighter when the separation between the
fastest consuming reaction and the rate-limiting step largely disappears.


\subsection{Scalability analysis}
\label{sec:scalability}

The previous experiments assess the quality of the proposed growth bound as a
selection criterion. We now characterize the computational performance of
Algorithm~\ref{alg:3} on synthetic instances of increasing size.

Synthetic instances were generated by embedding a verified
autocatalytic core ($A\rightarrow2A$) into random CRNs of increasing size,
thereby guaranteeing feasibility while varying the surrounding network
structure. Instance sizes range from
$5\times7$ to $45\times60$, with five independent realizations per size. Kinetic constants
are sampled from a log-uniform distribution and subsequently normalized. Each
parametric subproblem $P(\mu)$ is solved with a time limit of $30$ seconds. For every instance we record the wall-clock time of the complete run, the number
of parametric subproblems solved by the descending scaled-norm sweep, the total
number of generalized fractional programming iterations, whether the global MAF
bound used for screening was certified, whether screening was activated, and
whether all auxiliary subproblems were solved to proven optimality.
All experiments were performed on the Euler Cluster
(CeMEAI--ICMC/USP), using one compute node with eight CPU cores, Python~3.12.8,
and Gurobi~12.0.3.

\begin{table}[h]
\centering
\begin{tabular}{lcccc}
\toprule
$|\mathcal N|\times|\mathcal A|$
& Median subproblems
& Median iterations
& Median time (s) [min--max]
& Proven optimal\\
\midrule
$5\times7$    & 26 & 107 & $1.9$ [$0.3$--$4.9$]      & 5/5 \\
$8\times11$   & 27 & 19  & $1.4$ [$0.7$--$8.3$]      & 5/5 \\
$12\times16$  & 28 & 29  & $4.8$ [$2.2$--$10.8$]     & 5/5 \\
$16\times21$  & 34 & 29  & $10.5$ [$6.2$--$29.2$]    & 5/5 \\
$20\times26$  & 20 & 43  & $26.8$ [$10.9$--$57.2$]   & 5/5 \\
$28\times38$  & 35 & 169 & $470.7$ [$43.6$--$874.8$] & 5/5 \\
$36\times48$  & 23 & 30  & $636.6$ [$69.4$--$1440$]  & 5/5 \\
$45\times60$  & 29 & 43  & $2861$ [$113.7$--$10345$] & 3/5 \\
\bottomrule
\end{tabular}
\caption{Computational performance of Algorithm~\ref{alg:3} on synthetic
autocatalytic CRNs. Reported values are medians over five
instances of each size; ``Median subproblems'' is the number of parametric
subproblems solved by the descending scaled-norm sweep. The column ``Proven
optimal'' reports how many of the five instances had all auxiliary subproblems
solved to certified optimality within the $30$-second time limit; only those
runs provide a complete global optimality certificate for the scaled instance.
The $28\times38$ instance matches the size of the formose network.}
\label{tab:escalabilidad}
\end{table}

\begin{figure}[h]
\centering
\includegraphics[width=\textwidth]{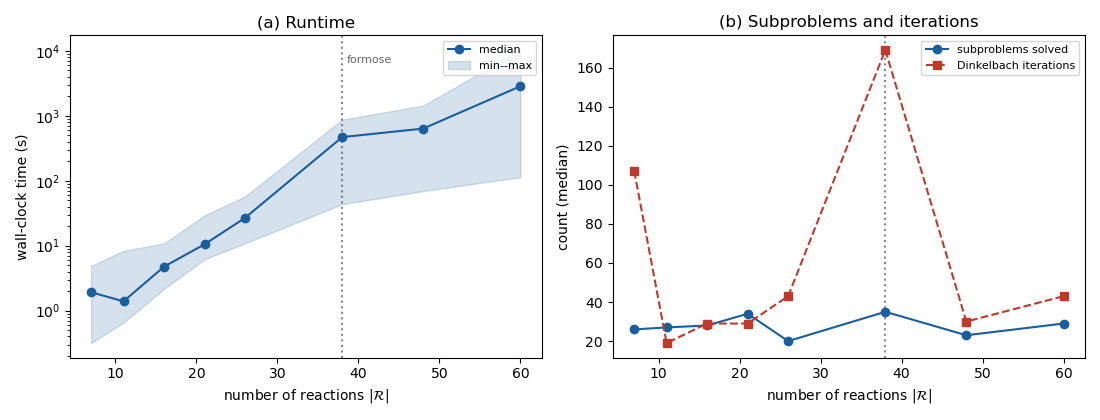}
\caption{Scalability of Algorithm~\ref{alg:3} as the number of hyperarcs
increases. Values are medians over the instances of each size, with the shaded
region showing the minimum and maximum. (a) Total wall-clock time. (b) Number of
parametric subproblems solved under screening and cumulative generalized
fractional programming iterations. The dotted vertical line marks the size of the
formose network.}
\label{fig:escalabilidad}
\end{figure}

Figure~\ref{fig:escalabilidad} and Table~\ref{tab:escalabilidad} summarize these
results, and two observations characterize the behavior of the method. First,
the outer parametric sweep is short: the descending traversal, together with the
incumbent-based screening, solves only a small number of subproblems per instance,
and this count remains roughly stable as the network grows by almost an order of
magnitude. 

Second, the computational effort is dominated by the parametric subproblems
themselves. As the instance size grows, each auxiliary MILP~\eqref{eq:milp-aux}
becomes substantially harder because of the increasing number of binary variables,
so the total running time grows markedly,  exhibiting a clear super-polynomial
trend, even though the number of subproblems stays nearly constant. The
difficulty of these MILPs depends strongly on the network topology, producing
substantial variability among instances of the same size; for this reason
Table~\ref{tab:escalabilidad} reports medians together with minimum and maximum
values rather than averages.

Overall, the proposed framework remains practical for instances comparable in
size to the largest real network considered in this paper when the auxiliary
MILPs can be solved to certification. At the formose scale ($28\times38$), every
synthetic instance is solved to proven optimality for the scaled instance in
about eight minutes on median, and the $36\times48$ instances are also solved
to certified optimality, in under half an hour. The largest instances
($45\times60$) delimit the current tractability frontier: the descending
scaled-norm sweep remains short, but the individual auxiliary MILPs become hard
enough that in two of the five instances some subproblem reaches the prescribed
time limit, leaving the global optimum uncertified for those runs. 
These results indicate that the current computational bottleneck lies in the
solution of the auxiliary MILPs rather than in the parametric decomposition
itself, suggesting that future work should focus on stronger formulations,
decomposition methods, and specialized heuristics for larger reaction
networks.

\section{Conclusions}
\label{sec:conc}

This paper extends the kinetic--stoichiometric growth bound of
\citet{gagrani2026topological}, originally formulated for the analysis of
fixed reaction networks, into an optimization framework for selecting
reaction subnetworks with the greatest theoretical growth potential.

The resulting optimization problem is nonconvex. To solve it exactly, we
developed a parametric solution method that combines generalized fractional
programming with mixed-integer optimization. By exploiting the discrete
structure of the kinetic norm, the algorithm decomposes the original problem
into a finite sequence of MAF optimization problems, for which finite
convergence and global optimality are established for the corresponding scaled
formulation.

The computational analysis highlights both the strengths and the limitations of
the proposed criterion. On the Oregonator and the designed benchmark,
growth-bound maximization identifies the kinetically preferred
autocatalytic subnetwork, even when it differs from the MAF-optimal
solution. In contrast, the formose reaction together with the reverse
tricarboxylic acid and glyoxylate cycles show that the theoretical bound may
substantially overestimate the realized balanced-growth rate whenever the
reaction determining the kinetic norm is decoupled from the rate-limiting step
governing growth. Consequently, the proposed criterion is most informative for
reaction networks containing multiple competing autocatalytic subnetworks with
distinct kinetic characteristics, whereas structurally rigid autocatalytic
cycles naturally reduce the optimization problem to classical MAF
maximization. To our knowledge, these constitute the first computational
assessment of consumption-based growth bounds on chemically relevant reaction
networks, thereby clarifying both their applicability and their limitations.

From an algorithmic perspective, the scalability study shows that the outer
parametric sweep remains short even for the largest reaction networks
considered, indicating that the principal computational challenge lies in the
solution of the auxiliary mixed-integer optimization problems rather than in
the parametric decomposition itself.

Several directions naturally emerge from this work. From a modeling
perspective, developing kinetic norms that explicitly account for the
rate-limiting step could substantially tighten the theoretical bound and
improve its predictive ability. Extending the framework beyond scalable
first-order kinetics to general mass-action mechanisms would considerably
broaden its applicability to biochemical and metabolic reaction networks.
From an optimization perspective, decomposition methods, stronger
formulations, valid inequalities, and specialized heuristics for the
auxiliary mixed-integer problems appear to be the most promising directions
for improving scalability.

Overall, this work establishes an exact optimization framework for studying
kinetic--stoichiometric growth in chemical reaction networks, providing a
bridge between structural amplification theory, autocatalysis, reaction kinetics, and
mathematical optimization through the unified language of directed
multihypergraphs.

\section*{Code and Data Availability}
The code implementing Algorithm~\ref{alg:3} and the MAF framework
of~\citet{gonzalez2024maf}, together with the formose reaction stoichiometric
matrices~\citep{muller2022formose} and the generator for the synthetic
scalability instances, is publicly available at
\url{https://github.com/anticiclon/kinetic-stoichiometric-growth-bounds}.

\section*{Acknowledgements}
This research was supported by grants PID2020-114594GB-C21, PID2024-156594NB-C21, IMAG-Mar\'ia de Maeztu CEX2020-001105-M funded by MCIN/AEI/10.13039/501100011033, the project C‐EXP‐139‐UGR23 funded by Programa de Andaluc\'ia FEDER, and grant~2026/00417-9
(FAPESP). The computational experiments were carried 
out using the computational resources of the Center for 
Mathematical Sciences Applied to Industry (CeMEAI) funded by 
FAPESP (grant~2013/07375-0).

%

\begin{thebibliography}{38}
\expandafter\ifx\csname natexlab\endcsname\relax\def\natexlab#1{#1}\fi
\providecommand{\url}[1]{\texttt{#1}}
\providecommand{\href}[2]{#2}
\providecommand{\path}[1]{#1}
\providecommand{\DOIprefix}{doi:}
\providecommand{\ArXivprefix}{arXiv:}
\providecommand{\URLprefix}{URL: }
\providecommand{\Pubmedprefix}{pmid:}
\providecommand{\doi}[1]{\href{http://dx.doi.org/#1}{\path{#1}}}
\providecommand{\Pubmed}[1]{\href{pmid:#1}{\path{#1}}}
\providecommand{\bibinfo}[2]{#2}
\ifx\xfnm\relax \def\xfnm[#1]{\unskip,\space#1}\fi
\bibitem[{Acemoglu et~al.(2012)Acemoglu, Carvalho, Ozdaglar and
  Tahbaz-Salehi}]{acemoglu2012network}
\bibinfo{author}{Acemoglu, D.}, \bibinfo{author}{Carvalho, V.M.},
  \bibinfo{author}{Ozdaglar, A.}, \bibinfo{author}{Tahbaz-Salehi, A.},
  \bibinfo{year}{2012}.
\newblock \bibinfo{title}{The network origins of aggregate fluctuations}.
\newblock \bibinfo{journal}{Econometrica} \bibinfo{volume}{80},
  \bibinfo{pages}{1977--2016}.
\bibitem[{Acemoglu et~al.(2017)Acemoglu, Ozdaglar and
  Tahbaz-Salehi}]{acemoglu2017networks}
\bibinfo{author}{Acemoglu, D.}, \bibinfo{author}{Ozdaglar, A.},
  \bibinfo{author}{Tahbaz-Salehi, A.}, \bibinfo{year}{2017}.
\newblock \bibinfo{title}{Microeconomic origins of macroeconomic tail risks}.
\newblock \bibinfo{journal}{American Economic Review} \bibinfo{volume}{107},
  \bibinfo{pages}{54--108}.
\bibitem[{Aghion and Howitt(2009)}]{aghion2008economics}
\bibinfo{author}{Aghion, P.}, \bibinfo{author}{Howitt, P.W.},
  \bibinfo{year}{2009}.
\newblock \bibinfo{title}{{The Economics of Growth}}.
\newblock \bibinfo{publisher}{The MIT Press}, \bibinfo{address}{Cambridge, MA
  and London}.
\newblock \bibinfo{note}{With the collaboration of Leonardo Bursztyn}.
\bibitem[{Ahuja et~al.(1988)Ahuja, Magnanti and Orlin}]{ahuja1988network}
\bibinfo{author}{Ahuja, R.K.}, \bibinfo{author}{Magnanti, T.L.},
  \bibinfo{author}{Orlin, J.B.}, \bibinfo{year}{1988}.
\newblock \bibinfo{title}{Network flows} .
\bibitem[{Atamt{\"u}rk and G{\'o}mez(2020)}]{atamturk2020strong}
\bibinfo{author}{Atamt{\"u}rk, A.}, \bibinfo{author}{G{\'o}mez, A.},
  \bibinfo{year}{2020}.
\newblock \bibinfo{title}{Strong formulations for sparse optimization}.
\newblock \bibinfo{journal}{Mathematical Programming} \bibinfo{volume}{181},
  \bibinfo{pages}{63--95}.
\newblock \DOIprefix\doi{10.1007/s10107-019-01407-9}.
\bibitem[{Baqaee and Farhi(2019)}]{baqaee2019granular}
\bibinfo{author}{Baqaee, D.R.}, \bibinfo{author}{Farhi, E.},
  \bibinfo{year}{2019}.
\newblock \bibinfo{title}{The macroeconomic impact of microeconomic shocks:
  Beyond hulten’s theorem}.
\newblock \bibinfo{journal}{Econometrica} \bibinfo{volume}{87},
  \bibinfo{pages}{1155--1203}.
\bibitem[{Barenholz et~al.(2017)Barenholz, Davidi, Reznik, Bar-On, Antonovsky,
  Noor and Milo}]{barenholz2017autocatalytic}
\bibinfo{author}{Barenholz, U.}, \bibinfo{author}{Davidi, D.},
  \bibinfo{author}{Reznik, E.}, \bibinfo{author}{Bar-On, Y.},
  \bibinfo{author}{Antonovsky, N.}, \bibinfo{author}{Noor, E.},
  \bibinfo{author}{Milo, R.}, \bibinfo{year}{2017}.
\newblock \bibinfo{title}{Design principles of autocatalytic cycles constrain
  enzyme kinetics and force low substrate saturation at flux branch points}.
\newblock \bibinfo{journal}{eLife} \bibinfo{volume}{6},
  \bibinfo{pages}{e20667}.
\bibitem[{Battiston et~al.(2021)Battiston, Amico, Barrat, Bianconi, Ferraz~de
  Arruda, Franceschiello, Iacopini, K{\'e}fi, Latora, Moreno
  et~al.}]{battiston2021physics}
\bibinfo{author}{Battiston, F.}, \bibinfo{author}{Amico, E.},
  \bibinfo{author}{Barrat, A.}, \bibinfo{author}{Bianconi, G.},
  \bibinfo{author}{Ferraz~de Arruda, G.}, \bibinfo{author}{Franceschiello, B.},
  \bibinfo{author}{Iacopini, I.}, \bibinfo{author}{K{\'e}fi, S.},
  \bibinfo{author}{Latora, V.}, \bibinfo{author}{Moreno, Y.}, et~al.,
  \bibinfo{year}{2021}.
\newblock \bibinfo{title}{The physics of higher-order interactions in complex
  systems}.
\newblock \bibinfo{journal}{Nature physics} \bibinfo{volume}{17},
  \bibinfo{pages}{1093--1098}.
\bibitem[{Bertsimas et~al.(2016)Bertsimas, King and
  Mazumder}]{bertsimas2016best}
\bibinfo{author}{Bertsimas, D.}, \bibinfo{author}{King, A.},
  \bibinfo{author}{Mazumder, R.}, \bibinfo{year}{2016}.
\newblock \bibinfo{title}{Best subset selection via a modern optimization
  lens}.
\newblock \bibinfo{journal}{The Annals of Statistics} \bibinfo{volume}{44},
  \bibinfo{pages}{813--852}.
\newblock \DOIprefix\doi{10.1214/15-AOS1388}.
\bibitem[{{Bick} et~al.(2023){Bick}, Gross, Harrington and
  Schaub}]{bick2023higher}
\bibinfo{author}{{Bick}, C.}, \bibinfo{author}{Gross, E.},
  \bibinfo{author}{Harrington, H.A.}, \bibinfo{author}{Schaub, M.T.},
  \bibinfo{year}{2023}.
\newblock \bibinfo{title}{What are higher-order networks?}
\newblock \bibinfo{journal}{SIAM review} \bibinfo{volume}{65},
  \bibinfo{pages}{686--731}.
\bibitem[{{Blanco} et~al.(2026){Blanco}, G{\'a}zquez and
  {Oca{\~n}a-Rivas}}]{blanco2026constructing}
\bibinfo{author}{{Blanco}, V.}, \bibinfo{author}{G{\'a}zquez, R.},
  \bibinfo{author}{{Oca{\~n}a-Rivas}, J.}, \bibinfo{year}{2026}.
\newblock \bibinfo{title}{Constructing nested self-amplifying multiperiod
  hypergraphs through mathematical optimization}.
\newblock \bibinfo{journal}{arXiv preprint arXiv:2604.12511} .
\bibitem[{{Blanco} et~al.(2024){Blanco}, Gonz{\'a}lez and
  Praful~Gagrani}]{gonzalez2024maf}
\bibinfo{author}{{Blanco}, V.}, \bibinfo{author}{Gonz{\'a}lez, G.},
  \bibinfo{author}{Praful~Gagrani, P.}, \bibinfo{year}{2024}.
\newblock \bibinfo{title}{Identifying self-amplifying hypergraph structures
  through mathematical optimization}.
\newblock \bibinfo{journal}{arXiv preprint arXiv:2412.15776}
  \bibinfo{note}{Submitted to the European Journal of Operational Research}.
\bibitem[{Blanco et~al.(2025)Blanco, Gonz{\'a}lez and Puerto}]{blanco2025fixed}
\bibinfo{author}{Blanco, V.}, \bibinfo{author}{Gonz{\'a}lez, G.},
  \bibinfo{author}{Puerto, J.}, \bibinfo{year}{2025}.
\newblock \bibinfo{title}{Fixed topology minimum-length trees with
  neighborhoods: A steiner tree based approach}.
\newblock \bibinfo{journal}{Computers \& Industrial Engineering}
  \bibinfo{volume}{207}, \bibinfo{pages}{111331}.
\bibitem[{Blokhuis et~al.(2020)Blokhuis, Lacoste and
  Nghe}]{blokhuis2020universal}
\bibinfo{author}{Blokhuis, A.}, \bibinfo{author}{Lacoste, D.},
  \bibinfo{author}{Nghe, P.}, \bibinfo{year}{2020}.
\newblock \bibinfo{title}{Universal motifs and the diversity of autocatalytic
  systems}.
\newblock \bibinfo{journal}{Proceedings of the National Academy of Sciences}
  \bibinfo{volume}{117}, \bibinfo{pages}{25230--25236}.
\bibitem[{Burgard et~al.(2003)Burgard, Pharkya and
  Maranas}]{burgard2003optknock}
\bibinfo{author}{Burgard, A.P.}, \bibinfo{author}{Pharkya, P.},
  \bibinfo{author}{Maranas, C.D.}, \bibinfo{year}{2003}.
\newblock \bibinfo{title}{Optknock: A bilevel programming framework for
  identifying gene knockout strategies for microbial strain optimization}.
\newblock \bibinfo{journal}{Biotechnology and Bioengineering}
  \bibinfo{volume}{84}, \bibinfo{pages}{647--657}.
\newblock \DOIprefix\doi{10.1002/bit.10803}.
\bibitem[{Crouzeix et~al.(1985)Crouzeix, Ferland and Schaible}]{crouzeix1985}
\bibinfo{author}{Crouzeix, J.P.}, \bibinfo{author}{Ferland, J.A.},
  \bibinfo{author}{Schaible, S.}, \bibinfo{year}{1985}.
\newblock \bibinfo{title}{An algorithm for generalized fractional programs}.
\newblock \bibinfo{journal}{Journal of Optimization Theory and Applications}
  \bibinfo{volume}{47}, \bibinfo{pages}{35--49}.
\bibitem[{Dinkelbach(1967)}]{dinkelbach1967}
\bibinfo{author}{Dinkelbach, W.}, \bibinfo{year}{1967}.
\newblock \bibinfo{title}{On nonlinear fractional programming}.
\newblock \bibinfo{journal}{Management Science} \bibinfo{volume}{13},
  \bibinfo{pages}{492--498}.
\newblock \URLprefix \url{http://www.jstor.org/stable/2627691}.
\bibitem[{Feinberg(2019)}]{feinberg2019foundations}
\bibinfo{author}{Feinberg, M.}, \bibinfo{year}{2019}.
\newblock \bibinfo{title}{Foundations of Chemical Reaction Network Theory}.
  volume \bibinfo{volume}{202} of \textit{\bibinfo{series}{Applied Mathematical
  Sciences}}.
\newblock \bibinfo{edition}{1} ed., \bibinfo{publisher}{Springer},
  \bibinfo{address}{Cham}.
\newblock \DOIprefix\doi{10.1007/978-3-030-03858-8}.
\bibitem[{Field and Noyes(1974)}]{field1974oscillations}
\bibinfo{author}{Field, R.J.}, \bibinfo{author}{Noyes, R.M.},
  \bibinfo{year}{1974}.
\newblock \bibinfo{title}{Oscillations in chemical systems. iv. limit cycle
  behavior in a model of a real chemical reaction}.
\newblock \bibinfo{journal}{The Journal of Chemical Physics}
  \bibinfo{volume}{60}, \bibinfo{pages}{1877--1884}.
\bibitem[{Fischetti and Jo(2018)}]{fischetti2018deep}
\bibinfo{author}{Fischetti, M.}, \bibinfo{author}{Jo, J.},
  \bibinfo{year}{2018}.
\newblock \bibinfo{title}{Deep neural networks as 0--1 mixed integer linear
  programs: A feasibility study}.
\newblock \bibinfo{journal}{Constraints} \bibinfo{volume}{23},
  \bibinfo{pages}{296--309}.
\newblock \DOIprefix\doi{10.1007/s10601-018-9285-6}.
\bibitem[{Gagrani et~al.(2024)Gagrani, Blanco, Smith and Baum}]{Gagrani2024}
\bibinfo{author}{Gagrani, P.}, \bibinfo{author}{Blanco, V.},
  \bibinfo{author}{Smith, E.}, \bibinfo{author}{Baum, D.},
  \bibinfo{year}{2024}.
\newblock \bibinfo{title}{Polyhedral geometry and combinatorics of an
  autocatalytic ecosystem}.
\newblock \bibinfo{journal}{Journal of Mathematical Chemistry}
  \bibinfo{volume}{62}, \bibinfo{pages}{1012--1078}.
\bibitem[{Gagrani et~al.(2026)Gagrani, Wang, De~Decker and
  Lacoste}]{gagrani2026topological}
\bibinfo{author}{Gagrani, P.}, \bibinfo{author}{Wang, J.},
  \bibinfo{author}{De~Decker, Y.}, \bibinfo{author}{Lacoste, D.},
  \bibinfo{year}{2026}.
\newblock \bibinfo{title}{Topological bounds on the dynamical growth rate of
  chemical reaction networks}.
\newblock \bibinfo{journal}{arXiv preprint arXiv:2603.02627} .
\bibitem[{Hordijk and Steel(2004)}]{hordijk2004detecting}
\bibinfo{author}{Hordijk, W.}, \bibinfo{author}{Steel, M.},
  \bibinfo{year}{2004}.
\newblock \bibinfo{title}{Detecting autocatalytic, self-sustaining sets in
  chemical reaction systems}.
\newblock \bibinfo{journal}{Journal of theoretical biology}
  \bibinfo{volume}{227}, \bibinfo{pages}{451--461}.
\bibitem[{Hordijk and Steel(2018)}]{hordijk2018autocatalytic}
\bibinfo{author}{Hordijk, W.}, \bibinfo{author}{Steel, M.},
  \bibinfo{year}{2018}.
\newblock \bibinfo{title}{Autocatalytic networks at the basis of life’s
  origin and organization}.
\newblock \bibinfo{journal}{Life} \bibinfo{volume}{8}, \bibinfo{pages}{62}.
\bibitem[{Huang and Ulanowicz(2014)}]{huang2014ecological}
\bibinfo{author}{Huang, J.}, \bibinfo{author}{Ulanowicz, R.E.},
  \bibinfo{year}{2014}.
\newblock \bibinfo{title}{Ecological network analysis for economic systems:
  growth and development and implications for sustainable development}.
\newblock \bibinfo{journal}{PloS one} \bibinfo{volume}{9},
  \bibinfo{pages}{e100923}.
\bibitem[{Kauffman(1986)}]{kauffman1986}
\bibinfo{author}{Kauffman, S.A.}, \bibinfo{year}{1986}.
\newblock \bibinfo{title}{Autocatalytic sets of proteins}.
\newblock \bibinfo{journal}{Journal of Theoretical Biology}
  \bibinfo{volume}{119}, \bibinfo{pages}{1--24}.
\bibitem[{Klamt and Gilles(2004)}]{klamt2004hypergraphs}
\bibinfo{author}{Klamt, S.}, \bibinfo{author}{Gilles, E.D.},
  \bibinfo{year}{2004}.
\newblock \bibinfo{title}{Hypergraphs and cellular networks}.
\newblock \bibinfo{journal}{PLoS Computational Biology} \bibinfo{volume}{1},
  \bibinfo{pages}{e6}.
\newblock \DOIprefix\doi{10.1371/journal.pcbi.0010006}.
\bibitem[{Klamt et~al.(2009)Klamt, Haus and Theis}]{klamt2009hypergraphs}
\bibinfo{author}{Klamt, S.}, \bibinfo{author}{Haus, U.U.},
  \bibinfo{author}{Theis, F.}, \bibinfo{year}{2009}.
\newblock \bibinfo{title}{Hypergraphs and cellular networks}.
\newblock \bibinfo{journal}{PLoS computational biology} \bibinfo{volume}{5},
  \bibinfo{pages}{e1000385}.
\bibitem[{Kua and Tripoli(2024)}]{kua2024formose}
\bibinfo{author}{Kua, J.}, \bibinfo{author}{Tripoli, L.P.},
  \bibinfo{year}{2024}.
\newblock \bibinfo{title}{Exploring the core formose cycle: Catalysis and
  competition}.
\newblock \bibinfo{journal}{Life} \bibinfo{volume}{14}.
\bibitem[{Labb{\'e} et~al.(2019)Labb{\'e}, Mart{\'\i}nez-Merino and
  Rodr{\'\i}guez-Ch{\'\i}a}]{labbe2019mixed}
\bibinfo{author}{Labb{\'e}, M.}, \bibinfo{author}{Mart{\'\i}nez-Merino, L.I.},
  \bibinfo{author}{Rodr{\'\i}guez-Ch{\'\i}a, A.M.}, \bibinfo{year}{2019}.
\newblock \bibinfo{title}{Mixed integer linear programming for feature
  selection in support vector machine}.
\newblock \bibinfo{journal}{Discrete Applied Mathematics}
  \bibinfo{volume}{261}, \bibinfo{pages}{276--304}.
\bibitem[{Laporte et~al.(2019)Laporte, Nickel and Saldanha~da Gama}]{LS2019}
\bibinfo{editor}{Laporte, G.}, \bibinfo{editor}{Nickel, S.},
  \bibinfo{editor}{Saldanha~da Gama, F.} (Eds.), \bibinfo{year}{2019}.
\newblock \bibinfo{title}{Location Science}.
\newblock \bibinfo{edition}{2} ed., \bibinfo{publisher}{Springer},
  \bibinfo{address}{Cham}.
\newblock \DOIprefix\doi{10.1007/978-3-030-32177-2}.
\bibitem[{Magnanti and Wong(1984)}]{magnanti1984network}
\bibinfo{author}{Magnanti, T.L.}, \bibinfo{author}{Wong, R.T.},
  \bibinfo{year}{1984}.
\newblock \bibinfo{title}{Network design and transportation planning: Models
  and algorithms}.
\newblock \bibinfo{journal}{Transportation Science} \bibinfo{volume}{18},
  \bibinfo{pages}{1--55}.
\bibitem[{Müller et~al.(2022)Müller, Flamm and Stadler}]{muller2022formose}
\bibinfo{author}{Müller, S.}, \bibinfo{author}{Flamm, C.},
  \bibinfo{author}{Stadler, P.F.}, \bibinfo{year}{2022}.
\newblock \bibinfo{title}{What makes a reaction network ``chemical''?}
\newblock \bibinfo{journal}{Journal of Cheminformatics} \bibinfo{volume}{14},
  \bibinfo{pages}{63}.
\bibitem[{Nagurney(2013)}]{nagurney2013supply}
\bibinfo{author}{Nagurney, A.}, \bibinfo{year}{2013}.
\newblock \bibinfo{title}{Supply Chain Network Economics: Dynamics of Prices,
  Flows and Profits}.
\newblock \bibinfo{publisher}{Edward Elgar Publishing},
  \bibinfo{address}{Cheltenham, UK and Northampton, MA, USA}.
\bibitem[{von Neumann(1945)}]{neumann1945model}
\bibinfo{author}{von Neumann, J.}, \bibinfo{year}{1945}.
\newblock \bibinfo{title}{A model of general economic equilibrium}.
\newblock \bibinfo{journal}{The Review of Economic Studies}
  \bibinfo{volume}{13}, \bibinfo{pages}{1--9}.
\bibitem[{Orth et~al.(2010)Orth, Thiele and Palsson}]{orth2010reconstruction}
\bibinfo{author}{Orth, J.D.}, \bibinfo{author}{Thiele, I.},
  \bibinfo{author}{Palsson, B.{\O}.}, \bibinfo{year}{2010}.
\newblock \bibinfo{title}{What is flux balance analysis?}
\newblock \bibinfo{journal}{Nature Biotechnology} \bibinfo{volume}{28},
  \bibinfo{pages}{245--248}.
\newblock \DOIprefix\doi{10.1038/nbt.1614}.
\bibitem[{Smith and Morowitz(2004)}]{smith2004universality}
\bibinfo{author}{Smith, E.}, \bibinfo{author}{Morowitz, H.J.},
  \bibinfo{year}{2004}.
\newblock \bibinfo{title}{Universality in intermediary metabolism}.
\newblock \bibinfo{journal}{Proceedings of the National Academy of Sciences}
  \bibinfo{volume}{101}, \bibinfo{pages}{13168--13173}.
\bibitem[{Venturini and González(2024)}]{venturini2024formose}
\bibinfo{author}{Venturini, A.}, \bibinfo{author}{González, J.},
  \bibinfo{year}{2024}.
\newblock \bibinfo{title}{Prebiotic synthesis of glycolaldehyde and
  glyceraldehyde from formaldehyde: A computational study on the initial steps
  of the formose reaction}.
\newblock \bibinfo{journal}{ChemPlusChem} \bibinfo{volume}{89},
  \bibinfo{pages}{e202300388}.

\end{thebibliography}

\end{document}